\newtheorem{theorem}{Theorem}[section]
\newtheorem{corollary}[theorem]{Corollary}
\newtheorem{proposition}[theorem]{Proposition}
\newtheorem{lemma}[theorem]{Lemma}
\newtheorem{remark}[theorem]{Remark}
\newtheorem{definition}[theorem]{Definition}
\newtheorem{assumption}[theorem]{Assumption}
\title{Consensus Planning  with Primal, Dual, and Proximal Agents}
\author{Alvaro Maggiar\\
 Amazon Supply Chain Optimization Technologies \\
 \texttt{maggiara@amazon.com}
 \and
  Lee Dicker\\
  Amazon Supply Chain Optimization Technologies \\
  \texttt{leehd@amazon.com}
  \and
  Michael W. Mahoney\\
  Amazon Supply Chain Optimization Technologies \\
  \texttt{zmahmich@amazon.com}
 } 
\date{October 21, 2025}
\begin{document}

\maketitle

\begin{abstract}
\noindent 
Consensus planning is a method for coordinating decision making across complex systems and organizations, including complex supply chain optimization pipelines.
It arises when large interdependent distributed agents (systems) share common resources and must act in order to achieve a joint goal.  
In prior consensus planning work, all agents have been assumed to have the same interaction pattern (e.g., all dual agents or all primal agents or all proximal agents), most commonly using the Alternating Direction Method of Multipliers (ADMM) as proximal agents.
However, this is often not a valid assumption in practice, where agents consist of large complex systems, and where we might not have the luxury of modifying these large complex systems at will.  
In this paper, we introduce a consensus algorithm that overcomes this hurdle by allowing for the coordination of agents with different types of interfaces (named primal, dual, and proximal). 
Our consensus planning algorithm allows for any mix of agents by combining ADMM-like updates for the proximal agents, dual ascent updates for the dual agents, and linearized ADMM updates for the primal agents.
We prove convergence results for the algorithm, namely a sublinear $O(1/k)$ convergence rate under mild assumptions, and two-step linear convergence under stronger assumptions. 
We also discuss enhancements to the basic method and provide illustrative empirical results.
\end{abstract}

\section{Introduction}

\paragraph{Consensus Planning.}
Consensus planning refers to a  coordination mechanism to align different distributed agents (or systems) who share common resources and who must act in order to achieve a joint goal. 
For example, in the context of a large retailer, such a distribution of  tasks is necessary, given the scale of the supply chain problem, with separate systems focusing on (say) buying, removals, placement, capacity control, fulfillment, or transfers, to name just a few.  
These individual components act semi-independently, but some form of communication is required, given their interdependence, to ensure that they work in unison. 
This alignment is often times built through ad-hoc solutions, or proxies of connected systems.  To overcome these limitations, distributed, market-based approaches,  such as a Consensus Planning Protocol (CPP) \citep{cpp,cpptalk}, can be developed within organizations. A CPP starts with the agents, and it introduces a coordinator.
Information is exchanged between the agents through the coordinator in a structured manner; and the coordinator reconciles the various favored plans of the individual agents in a principled way to eventually reach a consensus. 
Associated with such mechanisms are agent-level plans, costs that characterize their favored plans, and costs of deviating from them.

\paragraph{Problem.} 
Consensus planning is often viewed as a variant of distributed optimization, where there are relatively few optimization agents and querying information from each agent is comparatively costly.  
A CPP relies on the iterative exchange of information between agents and a coordinator.  
The mechanism through which the coordinator updates and aligns the plans and prices hinges upon the nature of the agents.  
A theoretically popular approach centers around the use of \emph{proximal} interfaces, which are motivated by the theoretically attractive properties of the Alternative Direction Method of Multipliers (ADMM) \citep{boyd2011distributed}.  
In this proximal framework, the agents and the coordinator exchange two types of information:
1) a favored plan (primal variable); and 
2) a price (or cost of deviation) (dual variable).  
These two pieces of information are iteratively exchanged and updated by the agents and coordinator until convergence.  
Implicit in this approach of using ADMM as a coordination mechanism is the assumption that all agents are amenable to interacting with the coordinator via a proximal~interface. 

In many practical industrial settings, however, this assumption is not satisfied. 
The reason is that large complex systems already in place cannot be readily modified to fit this framework without incurring large engineering costs and development time. 
One example of such a hurdle would be in the presence of systems designed as large linear or mixed integer programs that would not allow for the addition of a proximal (nonlinear) term in their objective function.  
Another example is when a system that knows its utility function and can output the value and the gradient of that function (given a proposed plan) interacts with an agent that takes prices as input and outputs a plan but can not output the value of its utility function.

More generally, we can categorize agents into three broad classes, depending on the interface that they support: primal, dual, and proximal.  
Primal agents can be probed given a tentative plan, and they return a corresponding cost of deviation; 
dual agents can be called with a price, and they return a corresponding favored plan; and 
proximal agent can consume both a tentative plan and tentative cost, and they return updated values thereof. 
Most CPP algorithms are built with one type of agent in mind, requiring that all agents belong to the same class. 
The practical difficulty that we address theoretically in this paper is to devise methodologies that allow for a ``mix-and-match'' of agents, and are compatible with all types of agent interfaces, so as not to require the preliminary burden of modifying them to fit the same~type.
 
\paragraph{Main contributions.}
The purpose of this paper is to propose a generic CPP algorithm that is compatible with all three type of agents, without needing that any agent be modified. 
Our main algorithm combines ADMM-like updates for the proximal agents, dual ascent updates for the dual agents, and linearized ADMM updates for the primal agents. 
We prove several convergence results, including sublinear $O\left(\frac{1}{k}\right)$ convergence rate under mild conditions, and two-step linear convergence under stronger conditions; and we provide numerical examples that illustrate the behavior of the algorithm under different combinations of agent types, as well as the impact of acceleration. 
\section{Background}\label{sec:background}

\subsection{Consensus Problem}

The consensus problem is one that involves a set of agents $\mathcal{M}$ that each have their own cost function $g_i,i\in\mathcal{M}$. The goal is to optimize the sum of these functions for a common plan $z$: 
$\min_z \sum_{i\in\mathcal{M}} g_i(z)$. In order to leverage the separability of the agents, the problem is re-written to endow each agent with their own plan variable, subject to the constraint that they must all be equal:
\begin{align*}
\min_{\{x_i\}_{i\in\mathcal{M}},z}&~\sum_{i\in\mathcal{M}} g_i(x_i)\\
s.t.&~x_i=z,~\forall i \in \mathcal{M}.\nonumber
\end{align*}
Note that each agent's objective function could itself be the result of an optimization problem of the form $g_i(x):=\min_{y} g_i(x,y)$, where $y$ corresponds to a set of variables that are ``private'' to the agent.  
This formulation can be interpreted as the agents working towards agreeing on a common plan $z$, and optimizing their own objective function, given the consensus plan. 
Our objective is to define a generic Consensus Planning Protocol (CPP) to solve this general consensus problem. 
What algorithm we use depends in part on the interface the agents offer, in other words, what information they can consume, and what information they return.  
In practical systems, there are many ways agents can output information and interact with other agents and/or a coordinating algorithm.
We detail in the next section the three main types of agents considered in our consensus planning problem: primal, dual, and proximal agents.

\subsection{Agents}

In our consensus planning problem, there exist three types of agent interfaces: primal, dual, and proximal. These are the natural interfaces for the three main approaches to the problem: gradient descent, dual ascent, and the augmented Lagrangian method of multipliers, respectively.
\begin{description}
\item[Primal Agents:] A primal interface is the one that would be required in a first-order primal method such as gradient descent, and corresponds to the basic formulation of the consensus problem as:
\begin{align*}
\min_{z} \sum_{i\in\mathcal{M}} g_i(z).
\end{align*}
In a simple gradient descent scheme, we would in each iteration $k$ query the agents with the current tentative plan $z^k$, and they would return the gradients $\nabla g_i(z^k)$ at that point. We would then update the plan as $z^{k+1}=z^k -\rho \sum_{i\in\mathcal{M}} \nabla g_i(z^k)$, for some step size $\rho>0$. 
\item[Dual Agents:] A dual interface arises from a dualization of the constraint, and then solving of the dual problem as:
\begin{align*}
\max_{\{\lambda_i\}_{i\in\mathcal{M}}}\left\{\min_{\{x_i\}_{i\in\mathcal{M}}} \sum_{i\in\mathcal{M}} g_i(x_i) - \lambda_i^T (z-x_i)\right\}.
\end{align*}
We then require from the agents that they be able to consume the dual variables $\lambda_i$ and return the solutions to the subproblems in the brackets as $x_i=\arg\min_x  g_i(x) - \lambda_i^T (z-x)$. Note that the dual agent only requires the dual variables since the optimization is over $x$ and the term $\lambda_i^T z$ is a constant for that problem, which can be omitted.
\item[Proximal Agents:] A proximal interface results from the augmented Lagrangian formulation of the consensus problem, as:
\begin{align*}
\max_{\{\lambda_i\}_{i\in\mathcal{M}}}\left\{\min_{\{x_i\}_{i\in\mathcal{M}}} \sum_{i\in\mathcal{M}} g_i(x_i) - \lambda_i^T (z-x_i) + \frac{\rho}{2}\lVert z-x_i\rVert^2 \right\}.
\end{align*}
In this case, the agents need to be able to consume three pieces of information, prices $\lambda_i$, as in the dual agent, but also a consensus plan $z$, and a regularizing parameter $\rho$. They then return the solution to the subproblem: $x_i=\arg\min_x g_i(x) - \lambda_i^T (z-x) + \frac{\rho}{2}\lVert z-x\rVert^2$.
\end{description}

One of the motivations of the CPP is the ADMM algorithm and its favorable properties, which make the proximal interface a popular one. 
However, as a practical matter, in many realistic systems consisting of large complex agents, those agents cannot be readily ``turned into'' proximal agents.
Depending on their implementation, agents may naturally exist as primal or dual agents.
This requires that generic CPP algorithms be able to work across interface types.

\subsection{Previous Work}
 
Consensus planning is often viewed as a variant of distributed optimization, where there are relatively few optimization agents and querying information from each agent is comparatively costly.  
This variant is in contrast with much of the distributed optimization work where the central concern is that of scale across many agents \citep[ch.11]{ryu2022large}.  Given the requirement of a central coordinator, the setup is similar to that of federated learning.

Most of the distributed optimization algorithms assume the same interface for all agents: for example, a primal interface in the case of distributed (proximal) gradient descent; a dual interface in the case of dual decomposition; and a proximal interface in the case of distributed ADMM.  By contrast, this paper allows for any mix of primal, dual, and proximal interfaces. 

Heterogeneity of the agents is considered in the context of optimal control, however it usually takes a different meaning. \cite{sun2020distributed} for example consider coordination across agents that act in either discrete or continuous time.

A closer type of heterogeneity concerns the amount of information and computation that an agent can perform. \cite{niu2023fedhybrid} consider coordination among agents that can perform either gradient or Newton-type updates, i.e. first or second order agents. In our setting, both would still correspond to what we refer to as primal agents, and although our initial analysis focuses on the first-order formulation of the agents, and we address second order information in the extensions in Section~\ref{sec:second_order}.

\subsection{Motivating Examples}

We present in Table~\ref{tab:examples} several applications where consensus planning is relevant for a large online retailer.  
Note that, in many cases, the interface of a given agent is fixed by the existing implementation and design thereof, and thus must be accommodated by a generic CPP.  
These examples involve a few agents with varying types of interfaces, often times with mixed interfaces.  Part of the motivation of this work is to offer a single, unifying CPP algorithm to handle any such use case, in which different agents have different interfaces.

The different agent interfaces in Table~\ref{tab:examples} arise naturally from existing system architectures:
\begin{description}
\item[Dual interfaces] often correspond to agents that solve pricing-based subproblems (e.g., procurement systems that respond to price signals).
\item[Primal interfaces] correspond to agents that can evaluate costs and gradients but cannot easily solve optimization subproblems (e.g., simulation-based capacity models).
\item[Proximal interfaces] correspond to agents with flexible optimization capabilities (e.g., modern optimization-based planning systems).
\end{description}

This heterogeneity in interfaces reflects the reality of large-scale supply chain systems, where different components were developed at different times with different technological capabilities and constraints. Nonetheless, these systems are usually naturally convex. Buying agents for example typically stem from the modeling of a convex multi-period inventory management problem \citep{zipkin2000foundations}, and the dual interface allows for the problem to separate into product-level subproblems. Capacity agents also frequently involve convex costs and linear constraints, leading to convex problems, while transportation or flow agents are also naturally convex \citep{ahuja1988network}.

\begin{table}[t] 
\caption{Examples of consensus problems at a large online retailer.}\label{tab:examples}
\centering
\small
\begin{tabular}{p{3.5cm}p{10cm}}
\toprule
\textbf{Application} & \textbf{Description}\\
\midrule
Fullness  Optimization
& \emph{Objective}: coordinate inventory buying with physical network capacity. \\
& \emph{Agents}: buying agent and fullness (capacity) agent.  \\ 
& \emph{Interfaces}: dual for the buying agent; primal for the capacity agent. \\
\midrule
Throughput coordination
& \emph{Objective}: coordinate inventory flows throughout different regions (e.g., inbound, transfer, outbound),  given network labor constraints.   \\
& \emph{Agents}: each region. \\ 
& \emph{Interfaces}: proximal interface for all agents. \\
\midrule
Transportation optimization
& \emph{Objective}: coordinate transportation capacity across delivery stations and third party carriers. \\  
& \emph{Agents}: different stations, which cover overlapping geographical areas.  \\
& \emph{Interfaces}: proximal interface for all agents. \\ 
\midrule
Arrivals and throughput coordination
& \emph{Objective}: Like in throughput coordination,  coordinate inventory flows, but this differs because there is only one region and only one inventory flow (inbound arrivals) is considered. \\
& \emph{Agents}: buying agent (similar to fullness optimization problem) and throughput agent (analogous to throughput coordination problem). \\ 
& \emph{Interfaces}: buying agent has a dual interface; throughput agent may use a primal or dual interface. \\ 
\bottomrule
\end{tabular}
\end{table}
\section{Algorithm}

\subsection{Derivation}\label{sec:derivation}

We consider a consensus optimization problem involving a mixed set of agents $i\in\mathcal{M}$ that can be either primal, dual, or proximal. We let $\mathcal{P}$ be the set of primal agents, $\mathcal{D}$  be the set of dual agents, and $\mathcal{X}$ be the set of proximal agents. Each agent's objective function is given by $g_i$, where $i$ denotes their index, and we assume that the functions $g_i$ are convex, and in particular $\mu_i$-strongly convex for $i\in\mathcal{D}$, and have $\beta_i$-Lipschitz continuous gradients for $i\in\mathcal{X}$. The consensus problem to be solved is:
\begin{align}
\min_{\mathbf{x},z} &~\sum_{i \in \mathcal{M}} g_i(x_i)\label{eq:consensus_problem}\\
\text{s.t. }~& A z =\mathbf{x} \nonumber
\end{align}
where:
\begin{align*}
A&=\begin{bmatrix}
I\\
\vdots\\
I
\end{bmatrix},&
\mathbf{x}&=
\begin{bmatrix}
x_1\\
\vdots\\
x_M
\end{bmatrix}.
\end{align*}

We first make the different treatments of the agents explicit by separating agents based on their type. Applying partial duality (e.g., \cite[ch. 14]{Luenberger2021}) to the dual and proximal agents, and augmenting the latter with the augmented Lagrangian proximal term associated with their corresponding constraint, yields the following equivalent formulation:
\begin{align}
\max_{\lambda_i, i\in \mathcal{D}\cup \mathcal{X}} \min_{x_i,z} &~\sum_{i \in \mathcal{P}} g_i(x_i) + \sum_{i \in \mathcal{D}} g_i(x_i)+ \lambda_i^T (z-x_i) + \sum_{i \in \mathcal{X}} g_i(x_i)+ \lambda_i^T (z-x_i) + \frac{\rho_i}{2}\lVert z-x_i\rVert^2 ,
\label{eq:derivation}\\
\text{s.t.}~ & x_i=z\quad \forall i\in \mathcal{P}\nonumber
\end{align}
where we explicitly separate the three types of agents, and $\rho_i$ is the augmented Lagrangian parameter associated with agent $i\in\mathcal{X}$.  

Formulation~\eqref{eq:derivation} already suggests a direction for the solution of the problem,  owing in particular to the similarity in structure between ADMM \citep{boyd2011distributed} and traditional dual ascent methods \citep[ch. 14]{Luenberger2021} used to solve dualized consensus problems.
Both of these approaches involve iterations comprised of the same 3 steps:
\begin{enumerate}
\item $x_i$ update: update of the agents' primal variables,
\item $z$ update: update of the consensus plan through averaging,
\item $\lambda_i$ update: update of the dual variables.
\end{enumerate}
The similarity between dual and proximal agents can also be explained by the fact that one interpretation of the augmented Lagrangian relaxation is that it is simply the regular Lagrangian relaxation applied to a penalized version of the objective function (penalized by the addition of the quadratic term). That these two type of updates can be performed jointly is still to be proved, but it provides us with a framework within which to operate.  We then need to further incorporate the primal agents, which are queried with a tentative plan, $x_i$, and return gradient information at that point.  We detail in the next paragraph how we treat primal agents as approximate proximal agents, and we integrate them in the framework outlined above.

\paragraph{Primal agents as approximate proximal agents.}
A natural idea is to recast the primal agent as a proximal agent, where its function $g_i$ is replaced by some linearized approximation thereof using the gradient information returned by the primal agent. Fortunately, a substantive body of work has demonstrated that the ADMM updates do not need to be exact for the  method to converge \citep{he2002new,eckstein1992douglas,yang2011alternating}.
In particular, the terms of the ADMM updates can be linearized, be it the augmented quadratic term \citep{lin2011linearized,yang2013linearized}, or the agent function \citep{ouyang2013stochastic,suzuki2013dual}, or both through a more general use of a Bregman divergence term to replace the quadratic penalty, and an additional Bregman divergence term \citep{wang2014bregman}.  Leveraging these latter results, a simple Bregman-ADMM formulation in the context of the consensus problem has updates of the form:
\begin{align*}
x_i^{k+1} &= \arg\min_x g_i(x) - {\lambda_i^k}^T x + \frac{\rho_i}{2} \lVert z-x\rVert^2 + D_{\phi_i}(x,x_i^k),
\end{align*}
where $D_{\phi_i}$ is the Bregman distance associated with the convex function $\phi_i$ (see Appendix~\ref{def:bregman}). Letting $\phi_i$ be defined as:
\begin{align*}
\phi_i(x)&= \frac{L_i}{2}\lVert x \rVert^2 - g_i(x),
\end{align*}
where $L_i\geq \beta_i$ is a constant greater than the Lipschitz constant of $\nabla g_i$,  the update reads:
\begin{align}
x_i^{k+1} &= \arg\min_x g_i(x_i^k) + \nabla g_i(x_i^k)^T x + \frac{L_i}{2} \lVert x - x_i^k\rVert^2- {\lambda_i^k}^T x + \frac{\rho_i}{2} \lVert z^k-x\rVert^2.\label{eq:linearized_ADMM}
\end{align}
We can observe that the function $g_i$ has been linearized and replaced by a quadratic upper bound.  The solution to this subproblem is readily obtained as:
\begin{align*}
x_i^{k+1} &= \frac{L_i x_i^k + \rho_i z^k}{L_i+\rho_i} - \frac{1}{L_i+\rho_i}\left(\nabla g_i(x_i^k)-\lambda_i^k\right).
\end{align*}
We will see in Section~\ref{sec:comments} that when all agents are primal, the application of this linearized ADMM yields updates that bear a strong similarity to the ones performed by distributed gradient descent~\citep{nedic2009distributed}, and to the FedHybrid updates of \cite{niu2023fedhybrid}.

\subsection{Formulation}\label{sec:formulation}

We detail in this section the steps of the algorithm following the high level derivation presented in Section~\ref{sec:derivation}.  The agents can be either primal ($i\in\mathcal{P}$), dual ($i\in\mathcal{D}$), or proximal ($i\in\mathcal{X}$).  We recall that we consider $\mu_i$-strongly convex functions $g_i,i\in\mathcal{D}$ for dual agents, and $\beta_i$-Lipschitz continuous gradients for $g_i,i\in\mathcal{P}$.  We allow for $\mu_i=0$ and $L_i=+\infty$ for those other functions that are not strongly convex or do not have Lipschitz continuous gradients, for notational simplicity.  Each agent is allowed to have their own regularizing parameter/learning rate $\rho_i$, although in practice we often use the same one for agents of the same type. We will return to a discussion on the choice of these hyperparameters in Section~\ref{sec:enhancements}, but since the dual agents essentially perform dual ascent, and as we will confirm in the proof of the convergence results, we impose that $\rho_i<\mu_i,~\forall i\in\mathcal{D}$.  On the other hand, we recall that we require $L_i>\beta_i$ for $i\in\mathcal{P}$, and place no restriction on $\rho_i>0$ for $i\in \mathcal{X}$. We further assume throughout the paper that the initial prices $\lambda_i^0$ are such that $\sum_{i\in\mathcal{M}} \lambda_i=0$.

In each iteration of the algorithm, we alternate between agent updates, consensus update, and price updates as follows:
\begin{description}
\item[Agent Updates:]Update the agents $i\in\mathcal{M}$ in parallel,  with the following updates depending on the agent type.
\begin{description}
\item[Primal Agents ($i\in\mathcal{P}$):]
\begin{align}
x_i^{k+1} &= \arg\min_x g_i(x_i^k) + \nabla g_i(x_i^k)^T x + \frac{L_i}{2} \lVert x - x_i^k\rVert^2- {\lambda_i^k}^T x + \frac{\rho_i}{2} \lVert z^k-x\rVert^2,\nonumber\\
&= \frac{L_i x_i^k + \rho z^k}{L_i+\rho_i} - \frac{1}{L_i+\rho_i}\left(\nabla g_i(x_i^k)-\lambda_i^k\right).\label{eq:primal_update}
\end{align}
\item[Dual Agents ($i\in\mathcal{D}$):]
\begin{align}
x_{i}^{k+1} &= \arg\min_x g_i(x) - {\lambda_i^k}^T x . 
\label{eq:dual_update}
\end{align}
\item[Proximal Agents ($i\in\mathcal{X}$):]
\begin{align}
x_i^{k+1} &= \arg\min_x g_i(x) - {\lambda_i^k}^T x + \frac{\rho_i}{2} \lVert z^k-x\rVert^2 . 
\label{eq:proximal_update}
\end{align}
\end{description}
\item[Consensus Update:]
\begin{align*}
z^{k+1}&=\arg\min_z \sum_{i\in\mathcal{P}\cup\mathcal{D}\cup\mathcal{X}} {\lambda_i^k}^T z + \sum_{i\in\mathcal{P}\cup\mathcal{D}\cup\mathcal{X}} \frac{\rho_i}{2}  \lVert z - x_i^{k+1}\rVert^2  .
\end{align*}
The update takes the form of a weighted average of the agents' plans (using the fact that the sum of the price variables is null):
\begin{align}
z^{k+1}&= \frac{1}{\sum_{i\in\mathcal{M}}\rho_i} \sum_{i\in\mathcal{M}} \rho_i x_i^{k+1}.\label{eq:consensus_update}
\end{align}
\item[Price Updates:]
\begin{align}
\lambda_i^{k+1} &= \lambda_i^{k} + \rho_i \left(z^{k+1}-x_i^{k+1}\right) \qquad i \in \mathcal{P}\cup\mathcal{D}\cup\mathcal{X} . 
\label{eq:price_update}
\end{align}
\end{description}

\begin{remark}\label{rem:unification}
All the agents' problems can be expressed through the same equation as:
\begin{align}
x_i^{k+1} &= \arg\min_x g_i(x) - {\lambda_i^k}^T x + \frac{\tilde{\rho}_i}{2} \lVert z^k-x\rVert^2 + D_{\phi_i}(x,x_i^{k}),\label{eq:unified_update}
\end{align}
by letting:
\begin{align*}
\phi_i(x)&=\begin{cases}
0,&i\in\mathcal{X}\cup\mathcal{D}\\
\frac{L_i}{2}\lVert x\rVert^2 - g_i(x),& i\in\mathcal{\mathcal{P}}
\end{cases},&\tilde{\rho}_i&= 
\begin{cases}
\rho_i,&i\in\mathcal{\mathcal{P}\cup\mathcal{X}}\\
0,&i\in\mathcal{D}.
\end{cases}
\end{align*}
\end{remark}

Note that, technically, the update of the primal agents is performed by the coordinator. The primal agents are queried with the tentative plan $x_i^k$ and return $\nabla g_i(x_i^k)$, which is then used by the coordinator to yield the updated primal agent plan through \eqref{eq:primal_update}. Additionally, the consensus update should have a term comprised of a multiple of the sum of the prices, but summing up the price updates, \eqref{eq:price_update} we obtain the optimality condition of the consensus update problem,  meaning that the sum of the prices is null after the first iteration (see e.g. \cite{boyd2011distributed})\footnote{To avoid numerical errors preventing the sum of prices to be null in practice, a simple leveling step can be added after the price updates by subtracting the mean price from each price.}. Making the additional assumption that the sum of the initial prices is also null further simplifies the notation. The algorithm is summarized in Algorithm~\ref{alg:firstalgo}, which we name 3-Agent Consensus Planning (3ACP).

\begin{algorithm}[htbp]
\caption{Vanilla 3-Agent CP (3ACP)}\label{alg:firstalgo}
\begin{algorithmic}
\State Let $\rho_i>0,~\forall i\in\mathcal{M}$, and $\lambda_i^0$ such that $\sum_{i\in\mathcal{M}} \lambda_i^0=0$.
\State Initialize $x_i^0,~\forall i\in\mathcal{M}$ and let $z^0:=\frac{1}{\sum_{i\in\mathcal{M}}\rho_i} \sum_{i\in\mathcal{M}} \rho_i x_i^{0}$.
\While{convergence criterion not met}
\State Update the primal, dual, and proximal agents' plans $x_i^{k+1}$ using \eqref{eq:primal_update}, \eqref{eq:dual_update}, and \eqref{eq:proximal_update}, respectively.
\State Update the consensus plan $z^{k+1}$ using \eqref{eq:consensus_update}.
\State Update the agents' prices $\lambda_i^{k+1}$ using \eqref{eq:price_update}.
\EndWhile
\end{algorithmic}
\end{algorithm}

\subsection{Comments}\label{sec:comments}

We can glean some insight into the generic CPP algorithm by considering its behavior when all agents are of the same type.

\paragraph{Primal agents only.}
Consider the case when  all the agents are primal agents, and assume for simplicity that they use common values $L=L_i,\forall i$ and $\rho=\rho_i,\forall i$. 
The updates take the form of Bregman ADMM~\citep{wang2014bregman}:
\begin{align*}
x_i^{k+1}&=\frac{L x_i^k + \rho z^k}{L+\rho} - \frac{1}{L+\rho}\left(\nabla g_i(x_i^k)-\lambda_i^k\right),
\end{align*}
yielding consensus updates of the form:
\begin{align*}
z^{k+1} &= \frac{1}{|\mathcal{P}|}\sum_{i\in\mathcal{P}} x_i^{k+1}\\
&= z^k - \frac{1}{L+\rho} \sum_{i\in\mathcal{P}} \nabla g_i(x_i^k),
\end{align*}
by using the fact that the sum of the $\lambda_i$ is null.  

These updates bear strong similarities with a gradient descent type of update. In fact, letting:
\begin{align*}
\mathbf{x}^k &= \left[ x_1^k, x_2^k,\ldots, x_M^k\right]^T,\\
\nabla \mathbf{g}(x^k)&=\left[ \nabla g_1(x_1^k),\nabla g_2(x_2^k),\ldots,\nabla g_M(x_M^k)\right]^T,\\
\bm{\lambda}^k &= \left[ \lambda_1^k, \lambda_2^k,\ldots, \lambda_M^k\right]^T
\end{align*}
and
\begin{align*}
W&=
\frac{1}{L+\rho}\begin{bmatrix}
L + \frac{\rho}{M} & \frac{\rho}{M} & \ldots & \frac{\rho}{M} \\
 \frac{\rho}{M} & L+ \frac{\rho}{M} & \ldots & \frac{\rho}{M} \\
 \vdots & & \ddots & \vdots\\
  \frac{\rho}{M} &  \frac{\rho}{M} & \ldots & L+\frac{\rho}{M}
\end{bmatrix},
\end{align*}
the updates in the case of all primal agents can be rewritten as:
\begin{align*}
\mathbf{x}^{k+1}&= W \mathbf{x}^k - \alpha \left(\nabla \mathbf{g}(x^k)-\bm{\lambda}^k\right),
\end{align*}
with $\alpha = \frac{1}{L+\rho}$. 

Given that the matrix $W$ is symmetric and doubly stochastic, these updates are almost those of the \emph{decentralized gradient descent} \citep{nedic2009distributed}, except for the presence of dual variables $\lambda^k$ in the update.  Decentralized gradient descent requires that the step sizes be decreasing in order to converge to the optimal, otherwise it only converges to within a neighborhood of the optimal \citep{nedic2009distributed, yuan2016convergence}.  When the functions are strongly convex, the consensus plan converges linearly to the optimal until it reaches said neighborhood \citep{yuan2016convergence}. By working on both primal and dual variables through the coordinator, the linearized (Bregman) ADMM overcomes the need for decreasing step sizes and converges linearly to the optimal solution.

Another way of expressing the update of the preferred plans when all the agents are primal is as:
\begin{align*}
x_i^{k+1}&=x_i^k - \frac{1}{L+\rho} \left(\nabla g_i(x_i^k)-\lambda_i^k + \rho (x_i^k-z^k) \right),
\end{align*}
which is similar to the gradient-type update of the FedHybrid algorithm (\cite{niu2023fedhybrid}). A difference being that FedHybrid updates the dual variables using the previous consensus plan, while our approach based on the Bregman ADMM updates the consensus variable before updating the dual variables. Nonetheless, this suggests that FedHybrid could be interpreted as a particular case of Bregman ADMM.

\paragraph{Dual agents only.}
When all agents are dual and use common values $L=L_i,\forall i$, and $\rho=\rho_i,\forall i$, the algorithm yields updates of the form:
\begin{align*}
x_i^{k+1}&= \arg\min_{x_i} g(x_i)-{\lambda_i^k}^T x_i,\\
z^{k+1} &=\frac{1}{|\mathcal{D}|}\sum_{i\in\mathcal{D}} x_i^{k+1},\\
\lambda_i^{k+1} &= \lambda_i^{k} + \rho (z^{k+1}-x_i^{k+1}),
\end{align*}
 which are simply the updates resulting from a dual ascent procedure.

\paragraph{Proximal agents only.}
When all the agents are proximal, the algorithm reduces to the traditional consensus ADMM \citep{boyd2011distributed}.

\section{Convergence}

\subsection{Overview of Results}

We prove in this section different forms of convergence of Algorithm~\ref{alg:firstalgo} (3ACP).  
We saw in Section~\ref{sec:comments} that when the agents are exclusively primal, dual, or proximal, the algorithm reduces to Bregman ADMM, dual ascent, and regular ADMM, respectively. 
Individually, each of these algorithms is known to converge sublinearly when the functions are merely assumed to be generally convex, and strongly convex in the case of dual ascent. 
Under stricter conditions,  namely strong-convexity and Lipschitz continuous gradients, they reach linear convergence rates (see \cite{deng2016global,hong2017linear} for ADMM and \cite{wang2014bregman, lin2022alternating} for Bregman ADMM).  

Our main convergence result in Section~\ref{sec:sublinear_conv} below shows that when all three types of agents are combined, and with similar general convexity assumptions, strong convexity  for the dual agents, and Lipschitz-continuous gradients for primal agents, we preserve the sublinear convergence rate.  Furthermore, the additional strong convexity and Lipschitz continuity of the gradients for all agents allow us to establish two-step linear convergence, which we prove in Section~\ref{sec:linear_conv}.

We divide the convergence proofs in three parts: 
we first establish the plain convergence of the theorem in Section~\ref{sec:plain_conv};
we prove its sublinear convergence rate in Section~\ref{sec:sublinear_conv}; and 
we consider linear convergence in Section~\ref{sec:linear_conv}.

\subsection{Preliminaries and Assumptions}\label{sec:preliminaries}

We detail here the main assumptions and some additional notation used in the proofs. 

\begin{assumption}\label{ass:sublinear_assumptions}
Let $\mathcal{M}=\mathcal{P}\cup\mathcal{D}\cup\mathcal{X}$ be a set of primal ($\mathcal{P}$),  dual ($\mathcal{D}$), and proximal agents ($\mathcal{X}$), with objective functions $g_i,~i\in\mathcal{M}$.  For all $i\in\mathcal{M}$, the functions $g_i$ are convex, and in particular $\mu_i$-strongly convex for $i\in\mathcal{D}$, and with $\beta_i$-Lipschitz continuous gradients for $i\in\mathcal{P}$.
\end{assumption}

These assumptions are to be expected: the strong convexity of the dual agents is necessary for the convergence of dual ascent, which is a special case of 3ACP, while the Lipschitz-continuity of the primal agents is necessary in order to bound them above by a quadratic function. 

The second assumption has to do with the existence of a solution to the problem in the form of a saddle point for its (regular) Lagrangian, defined as:
\begin{align*}
\mathcal{L}(\mathbf{x},z,\bm{\lambda}):=\sum_{i\in\mathcal{M}} g_i(x_i) + \lambda_i^T (z-x_i).
\end{align*}

\begin{assumption}\label{ass:saddle_point}
The (regular) Lagrangian associated with the consensus problem \eqref{eq:consensus_problem} has a saddle point $(\mathbf{x}^*,z^*,\bm{\lambda}^*)$:
\begin{align}
\mathcal{L}(\mathbf{x}^*,z^*,\bm{\lambda})\leq \mathcal{L}(\mathbf{x}^*,z^*,\bm{\lambda}^*)\leq \mathcal{L}(\mathbf{x},z,\bm{\lambda}^*),\qquad\forall~\mathbf{x},z,\bm{\lambda}.\label{eq:saddle_point}
\end{align}
\end{assumption}

The final assumption concerns the learning rate of the dual agents, which we require to be less than the strong convexity parameter $\mu_i$, and is the usual assumption for dual ascent.

\begin{assumption}\label{ass:dual_step}
For $i\in\mathcal{D}$, the learning rate $\rho_i$ used in Algorithm~\ref{alg:firstalgo} is less than the strong-convexity bound of $g_i$: $\rho_i\leq \mu_i$.
\end{assumption}

\subsection{Plain Convergence}\label{sec:plain_conv}

We consider in this section the plain convergence of the algorithm. The first step is to leverage the existence of a saddle point $(\mathbf{x}^*,z^*,\bm{\lambda}^*)$ and the optimality conditions of the agents' subproblems to establish inequalities that will serve as the basis for the various convergence proofs.

\begin{proposition}\label{prop:main_ineq}
Consider the consensus problem \eqref{eq:consensus_problem} and suppose Assumptions~ \ref{ass:sublinear_assumptions} and \ref{ass:saddle_point}  hold. Then, the iterates generated by Algorithm~\ref{alg:firstalgo} satisfy the following inequality for all $k\geq 0$:
\begin{align}
0\leq\sum_{i\in\mathcal{M}} g_i(x_i^{k+1})-g_i(x_i^*) + {\lambda_i^*}^T(z^{k+1}-x_i^{k+1}) \leq V^k - V^{k+1} - r^{k+1},\label{eq:main_ineq}
\end{align}
leading to:
\begin{small}
\begin{align}
\sum_{i\in\mathcal{M}} \frac{\mu_i}{2} \lVert x_i^{k+1}-x_i^*\rVert^2  \leq V^k - V^{k+1} - r^{k+1},~\text{and }\sum_{i\in\mathcal{D}} \frac{\mu_i}{2L_i^2} \lVert \lambda_i^{k}-\lambda_i^*\rVert^2  \leq V^k - V^{k+1} - r^{k+1}. \label{eq:main_ineq_strong_conv}
\end{align}
\end{small}
\end{proposition}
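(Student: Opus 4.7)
The statement combines a lower bound, an upper bound, and two strong-convexity corollaries, so I would dispatch them in that order, putting most of the work into the upper bound in \eqref{eq:main_ineq}.

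The lower bound is immediate from Assumption~\ref{ass:saddle_point}: since $x_i^*=z^*$ for every $i$, the Lagrangian at the saddle point collapses to $\mathcal{L}(\mathbf{x}^*,z^*,\bm{\lambda}^*)=\sum_i g_i(x_i^*)$, and evaluating the right half of \eqref{eq:saddle_point} at $(\mathbf{x}^{k+1},z^{k+1})$ gives the non-negativity. Once the upper bound is available, the first corollary follows by noting that $\lambda_i^*\in\partial g_i(x_i^*)$ at the saddle point, so $\mu_i$-strong convexity yields $g_i(x_i^{k+1})-g_i(x_i^*)\geq \lambda_i^{*T}(x_i^{k+1}-x_i^*)+\tfrac{\mu_i}{2}\|x_i^{k+1}-x_i^*\|^2$; summing after adding $\lambda_i^{*T}(z^{k+1}-x_i^{k+1})$ and using $\sum_i\lambda_i^*=0$ with $x_i^*=z^*$ shows the middle quantity in \eqref{eq:main_ineq} is at least $\sum_i\tfrac{\mu_i}{2}\|x_i^{k+1}-x_i^*\|^2$, and chaining with the upper bound closes it. For the second corollary, dual-agent optimality gives $\lambda_i^k=\nabla g_i(x_i^{k+1})$ and $\lambda_i^*=\nabla g_i(x_i^*)$, so $L_i$-Lipschitz continuity of $\nabla g_i$ yields $\|\lambda_i^k-\lambda_i^*\|^2\leq L_i^2\|x_i^{k+1}-x_i^*\|^2$, and substitution into the first corollary restricted to $\mathcal{D}$ is immediate.

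The crux is the upper bound. I would start from the unified update \eqref{eq:unified_update}, whose first-order optimality condition produces a subgradient $s_i^{k+1}\in\partial g_i(x_i^{k+1})$ with
\[
s_i^{k+1} = \lambda_i^k + \tilde{\rho}_i(z^k - x_i^{k+1}) - \nabla\phi_i(x_i^{k+1}) + \nabla\phi_i(x_i^k).
\]
The $\mu_i$-strong-convexity subgradient inequality bounds $g_i(x_i^{k+1})-g_i(x_i^*)\leq (s_i^{k+1})^T(x_i^{k+1}-x_i^*)-\tfrac{\mu_i}{2}\|x_i^{k+1}-x_i^*\|^2$. After adding $\lambda_i^{*T}(z^{k+1}-x_i^{k+1})$, summing, and using $\sum_i\lambda_i^*=0$ with $x_i^*=z^*$ to simplify the linear cross terms, I would substitute the price update $\lambda_i^{k+1}-\lambda_i^k=\rho_i(z^{k+1}-x_i^{k+1})$ and invoke the polarization identity
\[
2(\lambda_i^{k+1}-\lambda_i^*)^T(\lambda_i^k-\lambda_i^{k+1}) = \|\lambda_i^k-\lambda_i^*\|^2 - \|\lambda_i^{k+1}-\lambda_i^*\|^2 - \|\lambda_i^{k+1}-\lambda_i^k\|^2,
\]
its $z$-analogue, and the Bregman three-point identity $(\nabla\phi_i(x_i^k)-\nabla\phi_i(x_i^{k+1}))^T(x_i^{k+1}-x_i^*) = D_{\phi_i}(x_i^*,x_i^k) - D_{\phi_i}(x_i^*,x_i^{k+1}) - D_{\phi_i}(x_i^{k+1},x_i^k)$, so the cross terms telescope into $V^k-V^{k+1}-r^{k+1}$. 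For dual agents, $s_i^{k+1}=\lambda_i^k$ and $x_i^{k+1}=\nabla g_i^*(\lambda_i^k)$ by Fenchel duality, so $(\lambda_i^k-\lambda_i^*)^T(x_i^{k+1}-x_i^*)$ rewrites as Bregman divergences of $-g_i^*$, matching the $D_{-g_i^*}$ entries in $V^k$ and $r^{k+1}$.

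The main obstacle is this bookkeeping: three heterogeneous optimality conditions must merge into a single telescoping estimate, and the dual-agent contribution lives in the conjugate (price) space, producing Bregman divergences of $-g_i^*$ that have to mesh cleanly with the quadratic $V^k$-pieces from the primal and proximal agents. Assumption~\ref{ass:dual_step} is precisely the condition that keeps the dual component of $r^{k+1}$ non-negative (as encoded by $D_{\psi_i}$ in Remark~\ref{rem:rk}), and verifying that every cross term either cancels or packs into $V^k-V^{k+1}-r^{k+1}$ with the right sign is where the calculation is most delicate.
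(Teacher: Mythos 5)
Your proposal is correct and follows essentially the same route as the paper's proof: saddle-point inequality for the lower bound, (strong) convexity plus the agents' optimality conditions, polarization and the Bregman three-point identity to telescope into $V^k - V^{k+1} - r^{k+1}$, and Fenchel conjugacy to express the dual agents' contribution via $D_{-g_i^*}$. The only cosmetic differences are that you organize the computation around the unified update \eqref{eq:unified_update} rather than treating the three agent classes separately, and you obtain the third inequality directly from Lipschitz continuity of $\nabla g_i$ rather than through the $\tfrac{1}{L_i}$-strong convexity of $g_i^*$ — these are equivalent.
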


\begin{proof}
See Appendix~\ref{app:proof_main_ineq}.
\end{proof}

The following proposition justifies the need for Assumption~\ref{ass:dual_step}, as it corresponds to the condition under which $r^{k+1}$ is non-negative.

\begin{proposition}\label{prop:pos_r}
Under Assumption~\ref{ass:dual_step}, we have:
\begin{align*}
r^{k+1}\geq 0,~\forall k\geq 0.
\end{align*}
\end{proposition}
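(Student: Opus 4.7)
The plan is to show that each of the four sums that make up $r^{k+1}$ is individually non-negative, after which the conclusion follows by addition. Two of the four terms are immediate: $\sum_{i\in\mathcal{P}\cup\mathcal{X}}\frac{\rho_i}{2}\lVert z^k - x_i^{k+1}\rVert^2 \geq 0$ trivially since $\rho_i > 0$, and $\sum_{i\in\mathcal{P}} D_{\phi_i}(x_i^{k+1}, x_i^k) \geq 0$ because $\phi_i(x) = \frac{L_i}{2}\lVert x\rVert^2 - g_i(x)$ is convex. The convexity of $\phi_i$ follows from the $\beta_i$-Lipschitz continuity of $\nabla g_i$ together with the standing assumption $L_i \geq \beta_i$: the descent lemma $g_i(a) \leq g_i(b) + \nabla g_i(b)^\top(a-b) + \frac{\beta_i}{2}\lVert a-b\rVert^2$ directly yields $D_{\phi_i}(a,b) \geq \frac{L_i-\beta_i}{2}\lVert a-b\rVert^2 \geq 0$, so the Bregman divergence is non-negative (and in fact lower-bounded by a quadratic).

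The crux of the argument is the pair of dual-agent contributions, $\sum_{i\in\mathcal{D}}\left(\frac{1}{2\rho_i}\lVert \lambda_i^{k+1} - \lambda_i^k\rVert^2 - D_{-g_i^*}(\lambda_i^{k+1}, \lambda_i^k)\right)$. The key ingredient I would invoke is the classical conjugate duality between strong convexity and smoothness: since $g_i$ is $\mu_i$-strongly convex, its convex conjugate $g_i^*$ is $\tfrac{1}{\mu_i}$-smooth, i.e., $\nabla g_i^*$ is $\tfrac{1}{\mu_i}$-Lipschitz. Applying the descent lemma to $g_i^*$ then gives the upper bound $D_{g_i^*}(a, b) \leq \frac{1}{2\mu_i}\lVert a-b\rVert^2$. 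Matching the sign/convention used by the paper for $D_{-g_i^*}$ (which is consistent with Remark~\ref{rem:rk}, where $\psi_i = -g_i^* + \tfrac{1}{2\rho_i}\lVert\cdot\rVert^2$ must be convex for $D_{\psi_i}$ to be a genuine Bregman divergence), this produces
\begin{equation*}
\frac{1}{2\rho_i}\lVert \lambda_i^{k+1} - \lambda_i^k\rVert^2 - D_{-g_i^*}(\lambda_i^{k+1}, \lambda_i^k) \;\geq\; \tfrac{1}{2}\!\left(\tfrac{1}{\rho_i} - \tfrac{1}{\mu_i}\right)\!\lVert \lambda_i^{k+1} - \lambda_i^k\rVert^2 \;\geq\; 0,
\end{equation*}
where the final inequality is precisely Assumption~\ref{ass:dual_step}: $\rho_i \leq \mu_i$ is equivalent to $\tfrac{1}{\rho_i} \geq \tfrac{1}{\mu_i}$.

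Summing this estimate over $i\in\mathcal{D}$ and adding the two non-negative sums from the first paragraph yields $r^{k+1}\geq 0$. The only genuine subtlety is aligning the paper's $D_{-g_i^*}$ notation with the Fenchel/Legendre smoothness inequality so as to extract the $1/\mu_i$ constant on the right-hand side; beyond that the proof is a short assembly of standard facts. It is worth emphasizing at the end of the write-up that the condition $\rho_i \leq \mu_i$ emerges transparently as the exact threshold that makes the dual contribution non-negative, which motivates Assumption~\ref{ass:dual_step} in its stated form.
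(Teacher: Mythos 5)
Your proof is correct and takes essentially the same route as the paper's: the paper likewise reduces the claim to the dual-agent sum, bounds $D_{-g_i^*}(\lambda_i^{k+1},\lambda_i^k)$ above by $\tfrac{1}{2\mu_i}\lVert \lambda_i^{k+1}-\lambda_i^k\rVert^2$ via the $\tfrac{1}{\mu_i}$-Lipschitz continuity of the conjugate's gradient (its Lemma~\ref{lem:bregman_lipschitz}), and concludes from $\rho_i\leq\mu_i$ exactly as you do. Your explicit verification that the quadratic terms and the $D_{\phi_i}$ terms are non-negative (using $L_i\geq\beta_i$) is left implicit in the paper but rests on the same reasoning.
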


\begin{proof}
See Appendix~\ref{app:pos_r}
\end{proof}

\begin{remark}\label{rem:lyapunov}
An important observation in Proposition~\ref{prop:main_ineq} is that under Assumption~\ref{ass:dual_step} we have in particular $V^{k+1} \leq V^k $, and thus that the (non-negative) sequence $\{V^k \}$ is non-increasing and thus bounded, and can serve as a \emph{Lyapunov} function.
\end{remark}

The next theorem concerns the convergence of the algorithm, its asymptotic primal feasibility, as well as convergence of the dual variables for the dual agents. When the set of dual agents is not null, Algorithm~\ref{alg:firstalgo} also implies convergence of the primal and dual variables of the dual agents, as well as convergence of the consensus variable.

\begin{theorem}[Convergence of Algorithm~\ref{alg:firstalgo}]\label{thm:convergence}
Consider the consensus problem \eqref{eq:consensus_problem} and suppose Assumptions~ \ref{ass:sublinear_assumptions},  \ref{ass:saddle_point} , and \ref{ass:dual_step} hold.  Then, for the iterates generated by Algorithm~\ref{alg:firstalgo} we have:
\begin{align*}
\sum_{i\in\mathcal{M}} g_i(x_i^{k+1}) - \sum_{i\in\mathcal{M}}  g_i(x_i^{*})& \to 0,\\
z^{k+1} - x_i^{k+1} \to 0, ~\forall i \in \mathcal{M}.
\end{align*}
Additionally, if $\mathcal{D}\neq \emptyset$, we have:
\begin{align*}
\lambda_i^k&\to\lambda_i^*,~\forall i \in \mathcal{D},\\
x_i^{k}&\to x_i^*,~\forall i \in\mathcal{M},\\
z^k &\to z^*.
\end{align*}
\end{theorem}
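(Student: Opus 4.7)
The plan is to treat Proposition~\ref{prop:main_ineq} as both a summable residual identity and a Fej\'er-type inequality: summability provides primal feasibility and objective convergence via a squeeze in \eqref{eq:main_ineq}, while the strong-convexity estimates in \eqref{eq:main_ineq_strong_conv} (used only under $\mathcal{D}\neq\emptyset$) give full iterate convergence.

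I would first observe that $\{V^k\}$ is non-negative and, by Remark~\ref{rem:lyapunov}, non-increasing, hence convergent; telescoping \eqref{eq:main_ineq} yields $\sum_{k\geq 0} r^{k+1}\leq V^0<\infty$, so $r^{k+1}\to 0$. Each summand of $r^{k+1}$ is individually non-negative by the argument of Proposition~\ref{prop:pos_r}, from which I read off termwise: $\|z^k-x_i^{k+1}\|\to 0$ for $i\in\mathcal{P}\cup\mathcal{X}$; the dual block dominates $\tfrac{1}{2\rho_i}\|\lambda_i^{k+1}-\lambda_i^k\|^2$ (using $-D_{-g_i^*}\geq 0$, equivalently convexity of $g_i^*$), giving $\|\lambda_i^{k+1}-\lambda_i^k\|\to 0$ for $i\in\mathcal{D}$; and $D_{\phi_i}(x_i^{k+1},x_i^k)\to 0$ together with $(L_i-\beta_i)$-strong convexity of $\phi_i$ (since $L_i>\beta_i$) yields $\|x_i^{k+1}-x_i^k\|\to 0$ for $i\in\mathcal{P}$. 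For $i\in\mathcal{D}$, the KKT condition $x_i^{k+1}=\nabla g_i^*(\lambda_i^k)$ and $1/\mu_i$-Lipschitz continuity of $\nabla g_i^*$ similarly give $\|x_i^{k+1}-x_i^k\|\to 0$.

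The next step is to pass to $z^{k+1}-x_i^{k+1}\to 0$ for every $i$. For $i\in\mathcal{D}$ this is immediate from the price update: $z^{k+1}-x_i^{k+1}=\rho_i^{-1}(\lambda_i^{k+1}-\lambda_i^k)\to 0$. For $i\in\mathcal{P}\cup\mathcal{X}$, the decomposition $z^{k+1}-x_i^{k+1}=(z^{k+1}-z^k)+(z^k-x_i^{k+1})$ reduces the question to $\|z^{k+1}-z^k\|\to 0$. To obtain this, I would expand $\|z^{k-1}-x_i^k\|^2$ about $z^k$, weight by $\rho_i$ and sum over $i\in\mathcal{P}\cup\mathcal{X}$, and use the price-update identity $\sum_{i\in\mathcal{P}\cup\mathcal{X}}\rho_i(z^k-x_i^k)=-\sum_{i\in\mathcal{D}}(\lambda_i^k-\lambda_i^{k-1})$ (which follows from $\sum_i\lambda_i^k=0$). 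The resulting equation bounds $\bigl(\sum_{i\in\mathcal{P}\cup\mathcal{X}}\rho_i\bigr)\|z^k-z^{k-1}\|^2$ above by the $r^k$-controlled term $\sum_{i\in\mathcal{P}\cup\mathcal{X}}\rho_i\|z^{k-1}-x_i^k\|^2$ plus a cross term $2(z^{k-1}-z^k)^T\sum_{i\in\mathcal{D}}(\lambda_i^k-\lambda_i^{k-1})$, which AM--GM absorbs back into the left-hand side using boundedness of $\{z^k\}$ (from the Lyapunov estimate) together with $\|\lambda_i^k-\lambda_i^{k-1}\|\to 0$ for $i\in\mathcal{D}$; this forces $\|z^{k+1}-z^k\|\to 0$, with the degenerate case $\mathcal{P}\cup\mathcal{X}=\emptyset$ handled directly via the consensus update and $\|\lambda_i^k-\lambda_i^{k-1}\|\to 0$. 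Substituting $z^{k+1}-x_i^{k+1}\to 0$ into \eqref{eq:main_ineq}, the cross term $\sum_i{\lambda_i^*}^T(z^{k+1}-x_i^{k+1})$ vanishes, and a squeeze between $0$ and $V^k-V^{k+1}-r^{k+1}\to 0$ delivers $\sum_i g_i(x_i^{k+1})-g_i(x_i^*)\to 0$.

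Finally, when $\mathcal{D}\neq\emptyset$, the estimates in \eqref{eq:main_ineq_strong_conv} telescope to a finite sum, so $\|\lambda_i^k-\lambda_i^*\|\to 0$ for $i\in\mathcal{D}$ and $\|x_i^k-x_i^*\|\to 0$ for every agent with $\mu_i>0$. Picking any $i_0\in\mathcal{D}$, $z^k=x_{i_0}^k+(z^k-x_{i_0}^k)\to x_{i_0}^*=z^*$, and for every $j\in\mathcal{P}\cup\mathcal{X}$, $x_j^k=z^k-(z^k-x_j^k)\to z^*=x_j^*$. The main obstacle is the $\|z^{k+1}-z^k\|\to 0$ step: the residual $r^{k+1}$ only supplies one-step-lagged feasibility $\|z^k-x_i^{k+1}\|\to 0$, and closing that lag requires the algebraic identity above together with the cancellation $\sum_i\lambda_i^k=0$, which is precisely how the dual agents contribute nontrivially to the convergence argument for agents of the other two types.
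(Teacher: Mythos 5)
Your proposal is correct and reaches all the claims, but by a route that differs from the paper's in two substantive places. For asymptotic feasibility, the paper argues by cases: when $\mathcal{D}=\emptyset$ it reads $\lVert z^k-z^{k-1}\rVert\to 0$ directly off the alternate form of $r^k$ in Remark~\ref{rem:rk}, while when $\mathcal{D}\neq\emptyset$ it first establishes $x_i^k\to x_i^*$ and $\lambda_i^k\to\lambda_i^*$ for the dual agents via the telescoped inequalities \eqref{eq:main_ineq_strong_conv}, deduces $z^k\to z^*$, and only then recovers $z^{k}-x_i^{k}\to 0$ for $i\in\mathcal{P}\cup\mathcal{X}$. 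You instead prove $\lVert z^{k+1}-z^k\rVert\to 0$ uniformly across cases by expanding $\sum_{i\in\mathcal{P}\cup\mathcal{X}}\rho_i\lVert z^{k-1}-x_i^k\rVert^2$ about $z^k$ and exploiting $\sum_{i\in\mathcal{M}}\rho_i(z^k-x_i^k)=0$; this decouples feasibility and objective convergence from the strong-convexity machinery, which you then invoke (as the paper does) only for the additional iterate-convergence claims when $\mathcal{D}\neq\emptyset$. Your objective-value argument is also cleaner: once feasibility is known, you squeeze the sandwiched quantity in \eqref{eq:main_ineq} between $0$ and $V^k-V^{k+1}-r^{k+1}\to 0$, whereas the paper rebuilds separate upper and lower bounds from the optimality conditions and Cauchy--Schwarz. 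One small correction: you justify $\lVert\lambda_i^{k+1}-\lambda_i^k\rVert\to 0$ for $i\in\mathcal{D}$ by asserting $-D_{-g_i^*}\geq 0$, but in the paper's convention $D_{-g_i^*}$ is a nonnegative Bregman divergence (this is why it appears with a plus sign in $V^k$ and why Proposition~\ref{prop:pos_r} needs Assumption~\ref{ass:dual_step} at all); the correct justification is that proposition's estimate, which bounds the dual block of $r^{k+1}$ below by $\frac{1}{2\rho_i}\bigl(1-\frac{\rho_i}{\mu_i}\bigr)\lVert\lambda_i^{k+1}-\lambda_i^k\rVert^2$ and yields your conclusion under the strict inequality $\rho_i<\mu_i$ imposed in Section~\ref{sec:formulation}. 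With that repair the argument is complete.
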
 

\begin{proof}
See Appendix~\ref{app:proof_convergence}.
\end{proof}

\subsection{Sublinear Convergence}\label{sec:sublinear_conv}

We prove in this section the $O\left(\frac{1}{k}\right)$ ergodic convergence rate of the vanilla 3ACP algorithm \eqref{alg:firstalgo} under Assumptions~\ref{ass:sublinear_assumptions},  \ref{ass:saddle_point}, and \ref{ass:dual_step}.  By ergodic, we mean the convergence of the running average of the iterates.  While Theorem~\ref{thm:convergence} showed the plain convergence of Algorithm~\ref{alg:firstalgo}, we show here that at any iteration $K$, both the distance between the value of the the running average up to $K$, as well as its violation of the feasibility constraint decreases at a rate of $1/K$.

\begin{theorem}[Ergodic Sublinear Convergence]\label{thm:sublinear_conv}
Consider the consensus problem \eqref{eq:consensus_problem} and let Assumptions~\ref{ass:sublinear_assumptions}, \ref{ass:saddle_point}, and \ref{ass:dual_step} be satisfied. Let:
\begin{align*}
C&:=2 V^0, &\bar{\rho}&:=\max_i\{\rho_i\}, &\underline{\rho}&:=\min_i\{\rho_i\},
\end{align*}
and define the following running average iterates:
\begin{align*}
\mathbf{\hat{x}}^{K+1} &= \frac{1}{(K+1)}\sum_{k=0}^{K} \mathbf{x}^{k+1}, &\hat{z}^{K+1} &= \frac{1}{(K+1)}\sum_{k=0}^{K}z^{k+1}.
\end{align*}
Then, after $K$ iterations of Algorithm~\ref{alg:firstalgo}, we have:
\begin{align*}
\left\lvert \sum_{i\in\mathcal{M}} g_i(\hat{x}_i^{K+1}) - \sum_{i\in\mathcal{M}} g_i(x_i^*)\right\rvert &\leq \frac{C}{2(K+1)} + \frac{2 \sqrt{\bar{\rho} C}\lVert \bm{\lambda}^*\rVert}{\underline{\rho}(K+1)},\\
\left\lVert  A\hat{z}^{k+1}-\mathbf{\hat{x}}^{k+1} \right\rVert &\leq   \frac{2 \sqrt{\bar{\rho} C}}{\underline{\rho}(K+1)}.
\end{align*}
\end{theorem}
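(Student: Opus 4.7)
The proof proposal centers on the one-step inequality already established in Proposition~\ref{prop:main_ineq}, which under Assumption~\ref{ass:dual_step} (so $r^{k+1}\geq 0$) gives the bound
\[
\sum_{i\in\mathcal{M}} \bigl[g_i(x_i^{k+1})-g_i(x_i^*) + {\lambda_i^*}^T(z^{k+1}-x_i^{k+1})\bigr] \;\leq\; V^k - V^{k+1}.
\]
Summing this telescoping inequality from $k=0$ to $K$, dropping the non-negative term $V^{K+1}$, and invoking Jensen's inequality (using convexity of each $g_i$ and the definition of the running averages $\hat{\mathbf{x}}^{K+1}, \hat{z}^{K+1}$) yields the key estimate
\[
\sum_{i\in\mathcal{M}} \bigl[g_i(\hat{x}_i^{K+1})-g_i(x_i^*)\bigr] + {\bm{\lambda}^*}^T\bigl(A\hat{z}^{K+1}-\hat{\mathbf{x}}^{K+1}\bigr) \;\leq\; \frac{V^0}{K+1} \;=\; \frac{C}{2(K+1)}.
\]

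Next I would establish the feasibility bound. Using the price update \eqref{eq:price_update}, i.e.\ $\lambda_i^{k+1}-\lambda_i^k = \rho_i(z^{k+1}-x_i^{k+1})$, the sum over $k=0,\ldots,K$ telescopes to give
\[
\hat{z}^{K+1}-\hat{x}_i^{K+1} \;=\; \frac{1}{\rho_i(K+1)}\bigl(\lambda_i^{K+1}-\lambda_i^0\bigr).
\]
By Remark~\ref{rem:lyapunov}, $V^k\leq V^0=C/2$ for all $k$, and since $V^k$ contains the term $\frac{1}{2\rho_i}\|\lambda_i^k-\lambda_i^*\|^2$, we get $\|\lambda_i^k-\lambda_i^*\|\leq \sqrt{\rho_i C}$ for every $i$ and $k$. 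A triangle inequality then bounds $\|\lambda_i^{K+1}-\lambda_i^0\|\leq 2\sqrt{\rho_i C}\leq 2\sqrt{\bar\rho C}$, and assembling the squared norms into $\|A\hat z^{K+1}-\hat{\mathbf x}^{K+1}\|^2=\sum_i\|\hat z^{K+1}-\hat x_i^{K+1}\|^2$ and bounding each $1/\rho_i$ by $1/\underline\rho$ delivers the claimed $\|A\hat z^{K+1}-\hat{\mathbf x}^{K+1}\|\leq \frac{2\sqrt{\bar\rho C}}{\underline\rho(K+1)}$.

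For the two-sided function-value bound, I would combine the feasibility estimate with both sides of a saddle-point argument. The upper direction rearranges the displayed inequality above and applies Cauchy--Schwarz:
\[
\sum_{i\in\mathcal{M}} g_i(\hat{x}_i^{K+1})-g_i(x_i^*) \;\leq\; \frac{C}{2(K+1)} + \|\bm{\lambda}^*\|\,\|A\hat z^{K+1}-\hat{\mathbf x}^{K+1}\|.
\]
For the lower direction, I invoke Assumption~\ref{ass:saddle_point}: at the saddle, $x_i^*=z^*$, so $\mathcal L(\mathbf{x},z,\bm\lambda^*)\geq \mathcal L(\mathbf x^*,z^*,\bm\lambda^*)=\sum_i g_i(x_i^*)$ evaluated at $(\hat{\mathbf x}^{K+1},\hat z^{K+1})$ yields
\[
\sum_{i\in\mathcal{M}} \bigl[g_i(\hat x_i^{K+1})-g_i(x_i^*)\bigr] \;\geq\; -{\bm\lambda^*}^T(A\hat z^{K+1}-\hat{\mathbf x}^{K+1}) \;\geq\; -\|\bm\lambda^*\|\,\|A\hat z^{K+1}-\hat{\mathbf x}^{K+1}\|.
\]
Plugging in the feasibility bound on both sides gives the claimed absolute-value estimate.

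The main obstacle is bookkeeping the constants in the feasibility step: one must decide whether to handle each $\|\lambda_i^k-\lambda_i^*\|$ using only its own contribution to $V^k$, or to use the aggregate $\sum_i \tfrac{1}{2\rho_i}\|\lambda_i^k-\lambda_i^*\|^2\leq V^0$, and then carefully pair the resulting scalars with $1/\rho_i^2$ factors coming from the telescoping identity so that the bound emerges in the $\sqrt{\bar\rho C}/\underline\rho$ form stated. Every other ingredient---Jensen's inequality, the saddle-point inequality, Cauchy--Schwarz, and the telescoping of the price updates---is routine once Proposition~\ref{prop:main_ineq} and Remark~\ref{rem:lyapunov} are in hand.
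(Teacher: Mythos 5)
Your proposal follows essentially the same route as the paper: telescope the first inequality of Proposition~\ref{prop:main_ineq}, apply Jensen's inequality to pass to the running averages, bound the residual $A\hat{z}^{K+1}-\hat{\mathbf{x}}^{K+1}$ by telescoping the price updates and controlling $\bm{\lambda}^{K+1}-\bm{\lambda}^0$ via the Lyapunov bound $V^k\leq V^0$, and obtain the two-sided objective bound from the saddle-point inequality plus Cauchy--Schwarz. All of that matches the paper's argument.

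The one place where your bookkeeping does not deliver the stated constant is the feasibility step, and it is exactly the fork you flagged. If you bound each agent separately via $\frac{1}{2\rho_i}\lVert\lambda_i^k-\lambda_i^*\rVert^2\leq V^0$, so that $\lVert\lambda_i^{K+1}-\lambda_i^0\rVert\leq 2\sqrt{\bar\rho C}$ \emph{for each} $i$, then assembling $\lVert A\hat z^{K+1}-\hat{\mathbf x}^{K+1}\rVert^2=\sum_i\lVert\hat z^{K+1}-\hat x_i^{K+1}\rVert^2$ forces you to sum $|\mathcal{M}|$ copies of $(2\sqrt{\bar\rho C})^2/\rho_i^2$, which yields $\frac{2\sqrt{|\mathcal{M}|\,\bar\rho C}}{\underline\rho(K+1)}$ --- an extra $\sqrt{|\mathcal{M}|}$ relative to the theorem. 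The paper avoids this by keeping the dual variables stacked: from $\sum_i\frac{1}{2\rho_i}\lVert\lambda_i^k-\lambda_i^*\rVert^2\leq \frac{C}{2}$ one gets $\lVert\bm\lambda^{k}-\bm\lambda^*\rVert\leq\sqrt{\bar\rho C}$ for the \emph{whole} block vector (since $\sum_i\frac{1}{\rho_i}\lVert v_i\rVert^2\geq\frac{1}{\bar\rho}\lVert v\rVert^2$), and then $\lVert A\hat z^{K+1}-\hat{\mathbf x}^{K+1}\rVert=\frac{1}{K+1}\lVert P^{-1}(\bm\lambda^{K+1}-\bm\lambda^0)\rVert\leq\frac{1}{\underline\rho(K+1)}\bigl(\lVert\bm\lambda^{K+1}-\bm\lambda^*\rVert+\lVert\bm\lambda^0-\bm\lambda^*\rVert\bigr)\leq\frac{2\sqrt{\bar\rho C}}{\underline\rho(K+1)}$ with no dependence on the number of agents. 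This is a constant-level slip, not a flaw in the rate or the structure of the argument, but you should commit to the aggregate bound to recover the theorem as stated.
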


\begin{proof}
See Appendix~\ref{app:proof_sublinear_conv}.
\end{proof}

\subsection{Linear Convergence}\label{sec:linear_conv}

We prove in this section the two-step linear convergence of the 3ACP Algorithm. To do so, as well as to tighten the convergence bounds even in the sublinear case, we will assume that all agents have strongly convex objective functions with Lipschitz continuous~gradients, rather than the milder assumptions that only the dual agents be strongly convex, and only the primal agents have Lipschitz continuous gradients.  

\begin{assumption}\label{ass:linear_assumptions}
Let $\mathcal{M}$ be a set of primal,  dual,  and proximal agents, with objective functions $g_i,~i\in\mathcal{M}$.  For all $i\in\mathcal{M}$, the functions $g_i$ are  $\mu_i$-strongly convex with $\beta_i$-Lipschitz continuous~gradients.
\end{assumption}

We additionally define some terms and functions that will be useful in the proofs:
\begin{small}
\begin{align*}
V^k &:= \sum_{i\in\mathcal{M}} \frac{1}{2\rho_i}\lVert \lambda_i^k-\lambda_i^*\rVert^2 + \sum_{i\in\mathcal{P}\cup\mathcal{X}} \frac{\rho_i}{2} \lVert z^k - z^*\rVert^2 + \sum_{i\in\mathcal{P}} D_{\phi_i}(x_i^*, x_i^k)+ \sum_{i\in\mathcal{D}} D_{-g_i^*}(\lambda_i^k,\lambda_i^*),\\
r^k&:=  \sum_{i\in\mathcal{D}} \frac{1}{2\rho_i} \lVert \lambda_i^k-\lambda_i^{k-1}\rVert^2 - \sum_{i\in\mathcal{D}}D_{-g_i^*}(\lambda_i^k,\lambda_i^{k-1}) + \sum_{i\in\mathcal{P}\cup\mathcal{X}} \frac{\rho_i}{2} \lVert z^{k-1} - x_i^{k}\rVert^2 + \sum_{i\in\mathcal{P}} D_{\phi_i}(x_i^{k}, x_i^{k-1}) ,
\end{align*}
\end{small}
\noindent
where $g_i^*$ stands for the convex conjugate of $g_i$ (see Appendix~\ref{app:conv_conj}).  The function $V^k$ measures in some way the distance of the variables, both primal and dual, to their optimal values.  Enforcing Assumption~\ref{ass:dual_step} makes $r^k$ non-negative, and it is similar to the notion of residual in ADMM-related proofs, since $r^k$ being null would make the optimality conditions satisfied. 

\begin{remark}\label{rem:rk}
\begin{itemize}
\item When $\mathcal{D}=\emptyset$, $r^k$ can also be expressed as:
\begin{align*}
r^k&=  \sum_{i\in\mathcal{M}} \frac{1}{2\rho_i} \lVert \lambda_i^k-\lambda_i^{k-1}\rVert^2 + \sum_{i\in\mathcal{M}} \frac{\rho_i}{2} \lVert z^{k} - z^{k-1}\rVert^2 + \sum_{i\in\mathcal{P}} D_{\phi_i}(x_i^{k}, x_i^{k-1}),
\end{align*}
owing to the fact that $ \sum_{i\in\mathcal{M}}\lVert z^{k-1} - x^{k}\rVert^2 =  \sum_{i\in\mathcal{M}} \lVert z^{k-1} - z^k + z^k - x^{k}\rVert^2 =  \sum_{i\in\mathcal{M}}\lVert z^{k} - z^{k-1}\rVert^2 + \frac{1}{\rho_i^2} \lVert \lambda^{k} - \lambda^{k-1}\rVert^2 + \frac{2}{\rho_i} (z^{k-1} - z^k)^T (\lambda^k - \lambda^{k-1})= \sum_{i\in\mathcal{M}}\lVert z^{k} - z^{k-1}\rVert^2 + \frac{1}{\rho_i^2} \lVert \lambda^{k} - \lambda^{k-1}\rVert^2 $, where we used $\lambda_i^k - \lambda_i^{k-1}=\rho_i (z^k - x_i^k)$, and $\sum_{i\in\mathcal{M}}\lambda_i^k=0$.
\item When $\mathcal{M}=\mathcal{D}$,  we have:
\begin{align*}
V^k&= \sum_{i\in\mathcal{D}} D_{\psi_i}(\lambda_i^k, \lambda_i^*),\\
r^k&=  \sum_{i\in\mathcal{D}} D_{\psi_i}(\lambda_i^k, \lambda_i^{k-1}),
\end{align*}
where $\psi_i(\lambda):= -g_i^*(\lambda) + \frac{1}{2\rho_i}\|\lambda\|^2$.
\end{itemize}

\end{remark}

We then first define a few terms to simplify the notation, letting $\mathcal{S}\subset\mathcal{M}$ be any subset of agents:
\begin{align*}
\underline{\rho}_\mathcal{S}&:=\min_{i\in\mathcal{S}}\{\rho_i\},& \alpha_\mathcal{S}&:=\max_{i\in\mathcal{S}} \{\beta_i+L_{\phi_i}+\rho_i\},& L_{\phi_i}&:=\beta_i-\mu_i,\\
\bar{\rho}_\mathcal{S}&:=\max_{i\in\mathcal{S}}\{\rho_i\},& \underline{\mu}_\mathcal{S}&:=\min_{i\in\mathcal{S}}\{\mu_i\},& \overline{L_\phi}&:=\max_{i\in\mathcal{P}} \{L_{\phi_i}\}.
\end{align*}

\begin{theorem}[Linear Convergence of 3ACP]\label{thm:linear_conv}
Consider the consensus problem \eqref{eq:consensus_problem} and let Assumptions~\ref{ass:linear_assumptions}, \ref{ass:saddle_point}, and \ref{ass:dual_step} be satisfied.  Then, the iterates generated by Algorithm~\ref{alg:firstalgo} satisfy the following two-step linear convergence rate:
\begin{align*}
 V^{k+2} \leq \left(1 + \frac{1}{2}\min\left\{\frac{\underline{\rho}_\mathcal{M}}{\alpha_\mathcal{M}}, \frac{\underline{\mu}_\mathcal{D}}{\alpha_\mathcal{D}} , \frac{\underline{\mu}_{\mathcal{P}\cup\mathcal{X}}}{\overline{\rho}_{\mathcal{P}\cup\mathcal{X}}},  \frac{\underline{\mu}_\mathcal{P}}{\overline{L_\phi}}\right\}\right)^{-1} V^{k}.
\end{align*}

\end{theorem}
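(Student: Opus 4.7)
The plan is to establish an inequality of the form $V^{k+2} \leq C\,(V^k - V^{k+2})$ for some constant $C > 0$, which immediately rearranges to $V^{k+2} \leq (1 + 1/C)^{-1} V^k$; matching the statement amounts to showing $1/C = \tfrac{1}{2}\min\{\underline{\rho}_{\mathcal{M}}/\alpha_{\mathcal{M}}, \underline{\mu}_{\mathcal{D}}/\alpha_{\mathcal{D}}, \underline{\mu}_{\mathcal{P}\cup\mathcal{X}}/\overline{\rho}_{\mathcal{P}\cup\mathcal{X}}, \underline{\mu}_{\mathcal{P}}/\overline{L_\phi}\}$. The core tools are already contained in Proposition~\ref{prop:main_ineq}: applied at indices $\ell \in \{k{+}1, k{+}2\}$, under the stronger Assumption~\ref{ass:linear_assumptions}, it yields three ``descent'' quantities I will reuse throughout: $\sum_{i\in\mathcal{M}}\tfrac{\mu_i}{2}\|x_i^{\ell}-x_i^*\|^2 \leq V^{\ell-1}-V^\ell$, $\sum_{i\in\mathcal{D}}\tfrac{\mu_i}{2\beta_i^2}\|\lambda_i^{\ell-1}-\lambda_i^*\|^2 \leq V^{\ell-1}-V^\ell$, and $r^\ell \leq V^{\ell-1}-V^\ell$. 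Telescoping these over the two steps gives bounds in terms of $V^k - V^{k+2}$.

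The heart of the argument is to bound each of the four component types of $V^{k+2}$ by one of the descent quantities, each bound contributing one ratio inside the minimum. (i) For the dual-variable terms $\tfrac{1}{2\rho_i}\|\lambda_i^{k+2}-\lambda_i^*\|^2$, I use the stationarity conditions of the agent subproblems to express $\lambda_i^{k+2}$ explicitly: for proximal agents this gives $\lambda_i^{k+2}-\lambda_i^* = \nabla g_i(x_i^{k+2}) - \nabla g_i(x_i^*) + \rho_i(z^{k+2}-z^{k+1})$; for primal agents there is an additional linearization term $L_i(x_i^{k+2}-x_i^{k+1})$; for dual agents the identity $\nabla g_i(x_i^{k+2}) = \lambda_i^{k+1}$ together with $\lambda_i^{k+2}-\lambda_i^{k+1}=\rho_i(z^{k+2}-x_i^{k+2})$ lets me split and bound via $r^{k+2}$. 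Applying a triangle inequality, Lipschitz continuity $\|\nabla g_i(x)-\nabla g_i(x^*)\|\leq \beta_i\|x-x^*\|$, and the strong-convexity bounds above produces the $\underline{\rho}_{\mathcal{M}}/\alpha_{\mathcal{M}}$ and $\underline{\mu}_{\mathcal{D}}/\alpha_{\mathcal{D}}$ factors. (ii) For the consensus term $\sum_{i\in\mathcal{P}\cup\mathcal{X}}\tfrac{\rho_i}{2}\|z^{k+2}-z^*\|^2$, since $z^*=x_i^*$ and $z^{k+2}$ is a $\rho_i$-weighted average via \eqref{eq:consensus_update}, Jensen gives $\|z^{k+2}-z^*\|^2\leq \sum_j \tfrac{\rho_j}{\sum_i\rho_i}\|x_j^{k+2}-x_j^*\|^2$; combining with the strong-convexity bound on each $\|x_j^{k+2}-x_j^*\|^2$ yields the $\underline{\mu}_{\mathcal{P}\cup\mathcal{X}}/\overline{\rho}_{\mathcal{P}\cup\mathcal{X}}$ factor. (iii) For the Bregman terms $D_{\phi_i}(x_i^*,x_i^{k+2})$ and $D_{-g_i^*}(\lambda_i^{k+2},\lambda_i^*)$, the smoothness of $\phi_i$ (with constant $L_{\phi_i}=\beta_i-\mu_i$) and of $-g_i^*$ (dual to strong convexity of $g_i$) produce quadratic upper bounds in $\|x_i^{k+2}-x_i^*\|^2$ and $\|\lambda_i^{k+2}-\lambda_i^*\|^2$, reducing them to (i) and the primal bound; this contributes the $\underline{\mu}_{\mathcal{P}}/\overline{L_\phi}$ factor.

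The main obstacle will be the dual-variable bound (i) for \emph{primal} agents, because the extra linearization term $L_i(x_i^{k+2}-x_i^{k+1})$ makes the relevant quantity involve iterate differences from \emph{two} consecutive steps. One summand there is controlled by $r^{k+2} \leq V^{k+1}-V^{k+2}$ (via $D_{\phi_i}(x_i^{k+2},x_i^{k+1})$, using strong convexity of $\phi_i$ to extract $\|x_i^{k+2}-x_i^{k+1}\|^2$), while another — namely $\|x_i^{k+1}-x_i^*\|^2$ appearing after expanding $\nabla g_i(x_i^{k+1})-\nabla g_i(x_i^*)$ — is controlled only by the \emph{previous} descent $V^k-V^{k+1}$ via Proposition~\ref{prop:main_ineq} applied at step $k$. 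It is exactly this mismatch that forces the bound to be in terms of $V^k-V^{k+2}$ rather than the single step $V^{k+1}-V^{k+2}$, and is the source of the ``two-step'' nature of the rate. Careful bookkeeping of which summand is bounded by which residual, combined with the monotonicity $V^{k+1}\leq V^k$ to absorb cross-terms into $V^k-V^{k+2}$, then completes the argument; the constant $C$ is the sum of the four per-component constants, and taking the minimum rather than the sum (up to a factor of $2$) yields the stated expression.
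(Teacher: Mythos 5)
Your overall architecture matches the paper's: bound each of the four component types of the Lyapunov function by telescoped descent quantities spanning two consecutive iterations, sum, and use monotonicity of $\{V^k\}$ to pass from $V^{k+1}$ (or a mix of $V^{k+1}$ and $V^{k+2}$ terms) to $V^{k+2}$ on the left. Your diagnosis of why the rate is two-step (the multiplier terms are controlled by the descent $V^{k+1}-V^{k+2}$ while the primal/consensus/Bregman terms are controlled by $V^{k}-V^{k+1}$) is also the correct structural reason. Items (ii) and (iii) of your plan — Jensen on the consensus update for the $\|z-z^*\|^2$ term, and smoothness of $\phi_i$ and $-g_i^*$ to reduce the Bregman terms to squared norms — are exactly what the paper does.

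The gap is in item (i), the bound on $\sum_i \frac{1}{2\rho_i}\|\lambda_i-\lambda_i^*\|^2$, and it is a gap in the constant, not just in bookkeeping. The paper does \emph{not} bound the multiplier error by a forward triangle inequality on the stationarity identity $\lambda_i^{k+2}-\lambda_i^* = \nabla g_i(x_i^{k+2})-\nabla g_i(x_i^*)+\rho_i(z^{k+2}-z^{k+1})+\dots$ followed by $\|\nabla g_i(x)-\nabla g_i(x^*)\|\le\beta_i\|x-x^*\|$ and the strong-convexity descent. That route yields a per-agent constant of order $\beta_i^2/(\rho_i\mu_i)$ — a condition-number-squared-type factor — whereas the theorem's constant is $\alpha_\mathcal{M}/\underline{\rho}_\mathcal{M}$ with $\alpha_\mathcal{M}=\max_i\{\beta_i+L_{\phi_i}+\rho_i\}$, an \emph{additive} combination with no $1/\mu_i$. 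To get the stated constant, the paper instead strengthens the convexity step inside the proof of Proposition~\ref{prop:main_ineq} to the co-coercivity form $g_i(x_i^{k+1})-g_i(x_i^*)\le \nabla g_i(x_i^{k+1})^T(x_i^{k+1}-x_i^*)-\frac{1}{2\beta_i}\|\nabla g_i(x_i^{k+1})-\nabla g_i(x_i^*)\|^2$, obtaining $\sum_i\frac{1}{2\beta_i}\|\nabla g_i(x_i^{k+1})-\nabla g_i(x_i^*)\|^2\le V^k-V^{k+1}-r^{k+1}$, and then \emph{lower}-bounds that gradient difference by $\frac{1}{2(\beta_i+L_{\phi_i}+\tilde\rho_i)}\|\lambda_i^k-\lambda_i^*\|^2$ using Lemma~\ref{lem:sum_square} twice, with the weights $\gamma_i,\nu_i$ tuned precisely so that the discarded pieces ($D_{\phi_i}(x_i^{k+1},x_i^k)$ and $\frac{\tilde\rho_i}{2}\|z^k-x_i^{k+1}\|^2$) are exactly absorbed by $-r^{k+1}$. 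Your sketch is missing this co-coercivity strengthening and the cancellation against $r^{k+1}$; without it you would still prove \emph{some} two-step linear rate, but not the one stated. A secondary issue: when $\mathcal{D}\neq\emptyset$ your term $\rho_i\|z^{k+2}-z^{k+1}\|^2$ is not directly present in $r^{k+2}$ (which contains $\|z^{k+1}-x_i^{k+2}\|^2$ only for $i\in\mathcal{P}\cup\mathcal{X}$), so that piece of the bookkeeping also needs repair.
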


\begin{proof}
See Appendix~\ref{app:proof_linear_conv}.
\end{proof}

We are only able to achieve a ``two-step'' linear convergence here (a linear relation between values two steps apart, $V^k$ and $V^{k+2}$), as opposed to the more common one-step linear convergence (relating consecutive values $V^k$ and $V^{k+1}$), which is the one found in the special cases where all the agents are of the same type. This can be understood by the slight shift in discretization implied by plain gradient schemes compared to proximal algorithms. Methods such as gradient descent can be interpreted as \emph{forward Euler methods}, while proximal algorithms have \emph{backward Euler method} interpretation \citep{parikh2014proximal}. To reconcile these two forms of approximation, we need to consider the progress after two steps of the algorithm.

Note that if all the agents are of the same type, we recover known results about the (one-step) linear convergence of the algorithm, corresponding to the linear convergence of ADMM in the case of all proximal agents (see \cite[Thm 3.4]{lin2022alternating}), of Bregman ADMM in the case of all primal agents (see \cite[Thm 3.8]{lin2022alternating}), and of dual ascent in the case of all dual agents. These results are summarized below.

\begin{corollary}[Linear Convergence of 3ACP in the case of a single interface]\label{cor:linear_conv}
Consider the consensus problem \eqref{eq:consensus_problem} and let Assumptions~\ref{ass:linear_assumptions}, \ref{ass:saddle_point}, and \ref{ass:dual_step} be satisfied.  Then, the iterates generated by Algorithm~\ref{alg:firstalgo} satisfy the following linear convergence rates when the agents are all of the same type:
\begin{description}
\item[All Primal Agents:]
\begin{align*}
 V^{k+1} \leq \left(1 + \frac{1}{3}\min\left\{\frac{\underline{\rho}_\mathcal{P}}{\alpha_\mathcal{P}} , \frac{\underline{\mu}_{\mathcal{P}}}{\overline{\rho}_{\mathcal{P}}},  \frac{\underline{\mu}_\mathcal{P}}{\overline{L_\phi}}\right\}\right)^{-1} V^{k}.
\end{align*}
\item[All Dual Agents:]
\begin{align*}
 V^{k+1} \leq \left(1 + \frac{1}{2}\min\left\{\frac{\underline{\rho}_\mathcal{D}}{\alpha_\mathcal{D}}, \frac{\underline{\mu}_\mathcal{D}}{\alpha_\mathcal{D}}\right\}\right)^{-1} V^{k}.
\end{align*}
\item[All Proximal Agents:]
\begin{align*}
 V^{k+1} \leq \left(1 + \frac{1}{2}\min\left\{\frac{\underline{\rho}_\mathcal{X}}{\alpha_\mathcal{X}},, \frac{\underline{\mu}_{\mathcal{X}}}{\overline{\rho}_{\mathcal{X}}}, \right\}\right)^{-1} V^{k}.
\end{align*}
\end{description}
\end{corollary}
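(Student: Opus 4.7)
The plan is to specialize the proof of Theorem~\ref{thm:linear_conv} to each of the three homogeneous cases, exploiting the fact that when only a single agent type is present many of the cross-interaction terms driving the two-step nature of the general bound vanish. Concretely, the Lyapunov function $V^k$ collapses in each case: when $\mathcal{M}=\mathcal{P}$, $V^k$ contains only the price, consensus, and Bregman terms over $\mathcal{P}$; when $\mathcal{M}=\mathcal{D}$, by Remark~\ref{rem:rk} we have the clean form $V^k=\sum_{i\in\mathcal{D}} D_{\psi_i}(\lambda_i^k,\lambda_i^*)$ and $r^k=\sum_{i\in\mathcal{D}} D_{\psi_i}(\lambda_i^k,\lambda_i^{k-1})$; and when $\mathcal{M}=\mathcal{X}$, only the price and consensus terms over $\mathcal{X}$ remain.

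My approach would be to revisit Proposition~\ref{prop:main_ineq}, which already gives the one-step decrease $V^k-V^{k+1}\geq r^{k+1}$, and then show in each homogeneous case that one can lower-bound $r^{k+1}$ by a multiple of $V^{k+1}$ directly, without having to step through two iterations. The reason the two-step bound is needed in Theorem~\ref{thm:linear_conv} is that the residual $r^{k+1}$ contains price differences, $z$-differences, and primal Bregman differences, but these do not simultaneously cover all components of $V^{k+1}$ when multiple agent types coexist (e.g., the dual-agent contribution to $V^{k+1}$ must be linked to $\lambda$-differences via strong convexity of $g_i$, while the proximal contribution must be linked via the $z$-differences). With a single agent class, every component of $V^{k+1}$ has a direct counterpart in $r^{k+1}$, so a single iteration suffices.

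The key steps are then, for each case: (i)~write out $V^{k+1}$ and $r^{k+1}$ with the empty-class terms dropped; (ii)~use strong convexity of $g_i$ (bound $\tfrac{\mu_i}{2}\|x_i^{k+1}-x_i^*\|^2$-type terms by $\rho_i$-weighted differences via the price update $\lambda_i^{k+1}-\lambda_i^k=\rho_i(z^{k+1}-x_i^{k+1})$), and for the primal case also use $\beta_i$-Lipschitz continuity to bound the primal Bregman $D_{\phi_i}(x^*,x_i^{k+1})$ against $\|x_i^{k+1}-x_i^*\|^2$ with constant $\overline{L_\phi}$; (iii)~use the consensus update~\eqref{eq:consensus_update} and the optimality conditions of the agent subproblems to relate $\|z^{k+1}-z^*\|$ to the $x_i^{k+1}-x_i^*$ and $\lambda_i^{k+1}-\lambda_i^*$ quantities, importing the constants $\alpha_\mathcal{S}$ and $\bar\rho_\mathcal{S}$; (iv)~combine via a weighted sum, where each component of $V^{k+1}$ is dominated by a fraction of $r^{k+1}$, and read off the resulting contraction constant.

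The main obstacle I anticipate is the bookkeeping: in the all-primal case the factor $\tfrac{1}{3}$ (versus $\tfrac{1}{2}$ elsewhere) arises because $V^{k+1}$ has three component types to dominate (price, consensus, primal Bregman) whereas in the all-dual and all-proximal cases there are only two. Making sure the split of $r^{k+1}$ into three (respectively two) pieces is tight and aligns exactly with the minima $\{\underline\rho_\mathcal{P}/\alpha_\mathcal{P},\ \underline\mu_\mathcal{P}/\bar\rho_\mathcal{P},\ \underline\mu_\mathcal{P}/\overline{L_\phi}\}$, and the analogous two-term minima for $\mathcal{D}$ and $\mathcal{X}$, is a delicate accounting exercise. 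Once this is carried out, each case follows immediately from the inequality $V^k-V^{k+1}\geq r^{k+1}\geq \delta\, V^{k+1}$ rearranged as $V^{k+1}\leq (1+\delta)^{-1} V^k$.
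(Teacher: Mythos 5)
Your high-level intuition is right --- in the homogeneous cases every component of the Lyapunov function can be controlled within a single iteration, and the $\tfrac13$ versus $\tfrac12$ factors come from counting how many component types must share the decrease --- but the concrete mechanism you propose does not work. You reduce everything to the chain $V^k-V^{k+1}\ge r^{k+1}\ge\delta V^{k+1}$, and the second inequality is neither what the paper proves nor something you can establish from the ingredients you list: $r^{k+1}$ consists entirely of consecutive-iterate differences ($\lambda_i^{k+1}-\lambda_i^k$, $z^{k+1}-z^k$ or $z^k-x_i^{k+1}$, $D_{\phi_i}(x_i^{k+1},x_i^k)$), whereas $V^{k+1}$ measures distances to the unknown optimum. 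Bridging the two requires the saddle-point machinery of Proposition~\ref{prop:main_ineq}, and there the quantity that dominates the distance-to-optimum terms is the \emph{slack} $V^k-V^{k+1}-r^{k+1}$ (via strong convexity, inequality~\eqref{eq:grad_lipsch_bound_0}, and via gradient Lipschitz continuity, inequality~\eqref{eq:grad_lipsch_bound_2}), with $r^{k+1}$ serving only to absorb cross terms. Trying instead to prove $r^{k+1}\ge\delta V^{k+1}$ pointwise is essentially circular: such an error-bound statement is a consequence of the contraction you are trying to establish, not an input to it.

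The paper's actual argument specializes the component-wise bounds of Theorem~\ref{thm:linear_conv} and exploits an index shift that your outline does not identify. In the all-dual case, $V^k$ is composed entirely of quantities indexed at $k$ (namely $\|\lambda_i^k-\lambda_i^*\|^2$ and $D_{-g_i^*}(\lambda_i^k,\lambda_i^*)$), and inequality~\eqref{eq:grad_lipsch_bound_2} already controls $\|\lambda_i^k-\lambda_i^*\|^2$ by $V^k-V^{k+1}$, so only the first two of the theorem's four inequalities are needed and both sit at the same step. In the all-primal and all-proximal cases one uses the identity $\lambda_i^k+\rho_i(z^k-x_i^{k+1})=\lambda_i^{k+1}+\rho_i(z^k-z^{k+1})$ together with the alternative form of $r^k$ from Remark~\ref{rem:rk} (valid only when $\mathcal{D}=\emptyset$, since it contains $\tfrac{\rho_i}{2}\|z^{k+1}-z^k\|^2$ rather than $\tfrac{\rho_i}{2}\|z^k-x_i^{k+1}\|^2$) to re-derive~\eqref{eq:grad_lipsch_bound_2} with $\|\lambda_i^{k+1}-\lambda_i^*\|^2$ on the left but still $V^k-V^{k+1}$ on the right; this is precisely the step that collapses the two-step bound to one step, and it is the piece missing from your proposal. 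Once the relevant components of $V$ are bounded by $V^k-V^{k+1}$ at compatible indices, summing the three (respectively two) inequalities and invoking monotonicity of $\{V^k\}$ gives the stated rates.
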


\begin{proof}
See Appendix~\ref{app:proof_linear_conv_corollary}.
\end{proof}

\section{Practical Considerations}\label{sec:enhancements}

Algorithm~\ref{alg:firstalgo} presented the most basic version of the algorithm, and it can be improved upon in a number of different ways, either by relaxing some assumptions, or by incorporating algorithmic improvements. We consider in this section a few such extensions and practical considerations,  some of which could represent future research directions.

\subsection{Acceleration}\label{sec:acceleration}

The algorithms considered so far are essentially first-order methods, only making use of gradient information.  Such algorithms can often be accelerated through schemes that use previous iterates in to modify the direction of the step update, and use momentum. This is different from the ergodic convergence analysis of Section~\ref{sec:sublinear_conv}, where we considered the convergence of the running average of the iterates, but where that did not modify the step updates. One important such acceleration scheme is Nesterov's accelerated gradient descent \citep{nesterov1983method}, which has been successfully ported to many first order algorithms, including ADMM. This is especially interesting in our context since the primal and dual agents are handled as approximations of proximal agents, and are thus likely to lose some performance with respect to the~latter. 

Given that ADMM works both on the primal (individual plans and consensus) and dual (prices) variables,  acceleration techniques may target either of those sets of variables, or both.  For example, \cite{goldstein2014fast} presents an accelerated ADMM where the second set of primal variable and the prices are accelerated, while \cite{kadkhodaie2015accelerated} only modifies the dual variable but requires an additional update of the primal variables.  \cite{ouyang2013stochastic} considers accelerating the linearized ADMM algorithm used for the primal agents and is thus particularly relevant to us.  Additionally, accelerated methods are known to not be monotone in the objective value, and they usually display rippling effects.  Adaptive restart strategies to alleviate these issue were proposed in \cite{o2015adaptive}, and they can be adapted to the ADMM acceleration schemes, as was the case in \cite{goldstein2014fast}.

One difficulty is combining these different schemes. We can readily apply the accelerated scheme of \cite{goldstein2014fast} when the agents are dual and/or proximal, while we can also directly apply the algorithm of \cite{ouyang2013stochastic} when the agents are all primal, and some results resulting from those implementations are presented in Section~\ref{sec:quad_example}.  Combining these in order to allow for the acceleration of the algorithm for any combination of agents will be an interesting research topic.

\subsection{Tighter Quadratic Bounds for Primal Agents}\label{sec:quad_bounds}

Our approach to  primal agents involved the use of a quadratic approximation of the objective functions $g_i$ of the form $x\mapsto g_i(x_i^k) + \nabla g_i(x_i^k)^T (x-x_i^k) + \frac{L_i}{2}\lVert x\rVert^2$. This nonetheless restricts the type of linearization to spherical quadratic functions. We can tighten the bounds by considering not just spherical quadratic functions but more general quadratic functions so long as they dominate $g_i$. Letting $H_i$ be a symmetric positive definite matrix such that $H_i \succeq \nabla^2 g_i(x),~\forall x$, we may then use the following function $\phi_i$ in the definition of the Bregman divergence $D_{\phi_i}$: $\phi_i(x)=\frac{1}{2}x^T H_i x - g_i(x)=\frac{1}{2}\lVert x\rVert_{H_i}^2-g_i(x)$, instead of $\frac{L_i}{2}x^T x - g_i(x)$.

\subsection{Second Order Information}\label{sec:second_order}

Related to Section~\ref{sec:quad_bounds} above,  we might actually have access to second order information. This is especially useful for the primal and dual agents, since in the former case we could make use of a (close to) second-order approximation, while in the latter case, the price update could result from a Newton (or quasi-Newton) update as opposed to a simple first-order dual ascent. The use of second-order information in decentralized or federated learning has been considered in works such as \cite{zhang2015disco} or \cite{wang2018giant} for example.

For example,  for primal agents, we could consider a variable Bregman divergence $D_{\phi_i^k}$ of the form $\phi_i^k(x)=\frac{1}{2a_i}x^T(\nabla^2 g_i(x_i^k) +\epsilon I)  x - g_i(x)$, where $\varepsilon>0$ is used to ensure strong convexity. If we assume that the functions $g_i$ are strongly convex, this would yield updates similar to the Newton-type updates of the FedHybrid algorithm (\cite{niu2023fedhybrid}), reinforcing the view of FedHybrid as a particular case of Bregman ADMM.

For dual agents, we could consider a price update of the form $\lambda_i^{k+1} = \lambda_i^k + \rho_i \nabla^2 g_i(x_i^{k+1}) (z^{k+1}-x_i^{k+1})$,  or alternatively  $\lambda_i^{k+1} = \lambda_i^k + \rho_i (\nabla^2 g_i(x_i^{k+1}) + \epsilon I) (z^{k+1}-x_i^{k+1})$, for some $\epsilon>0$, where $\rho_i$ and $\epsilon$ are appropriately chosen to guarantee convergence, in particular so that $||\rho_i (\nabla^2 g_i(x_i^{k+1}) + \epsilon I)||<\mu_i$. In that case, however, the consensus update would need to be modified to:
\begin{align*}
z^{k+1}=\left(\sum_{i\in\mathcal{M}} H_i^{k+1}\right)^{-1} \left(\sum_{i\in\mathcal{M}} H_i^{k+1} x_i^{k+1}\right),
\end{align*}
where:
\begin{align*}
H_i^{k+1}:= \begin{cases}
\rho_i I,&\text{for }i\in\mathcal{P}\cup\mathcal{X},\\
\rho_i \nabla^2g_i(x_i^{k+1}),&\text{for }i\in\mathcal{D}
\end{cases}.
\end{align*}

\subsection{Choice of Hyperparameters}\label{sec:hyperparams}

One critical aspect in the implementation of Algorithm~\ref{alg:firstalgo},  similarly to ADMM algorithms,  is the choice of the hyperparameters $\rho_i,i\in\mathcal{M}$.  In ADMM, it is traditional to use a single parameter across agents, although there is a benefit to allowing for individual values, especially because different agents might have different scales of variables for which a single regularizing parameter might not be appropriate.  

Convergence bounds and in particular bounds characterizing the linear convergence of ADMM are often optimized for choices of $\rho_i$ given by the geometric mean of the strong convexity and gradient Lipschitz continuity: $\rho_i = \sqrt{\mu_i \beta_i}$, although this requires both that all the functions be strongly convex with Lipschitz continuous gradients, and that we have (approximate) knowledge thereof. On the other hand, dual ascent, which is essentially what the dual agents are performing, is optimized for $\rho_i=\mu_i$. As a result, even if we aim at using as few parameters as possible, it might be beneficial to set different values for dual agents than for primal/proximal ones.

In the absence of knowledge of $\mu_i$ and/or $\beta_i$, we can conceive of evaluating them ``on the fly'' based on accumulated information, or alternatively, dynamically adjusting them, as suggested by \cite{he2000alternating}. 
In particular, the self-adaptive parameter tuning can be applied at an agent level by considering their respective primal and and dual residuals.

We also note that the consensus update \eqref{eq:consensus_update} is performed as a weighted sum of the agents' preferred plans, where the weights are precisely given by the learning rates $\rho_i$.  Differences in the learning rates across agents imply differences in the relative importance of each agent's plan in computing the consensus plan.   The interaction between progress made by the individual agents' plans and their weight in the consensus update is not obvious.

\section{Example: Mixed Quadratic Agents}\label{sec:quad_example}

To illustrate the basic properties of our generic CPP, we consider in this section a synthetic example involving mixed quadratic agents. 
The advantage of such a setting, albeit simple, is that quadratic functions yield closed-form solutions that allow one to get more insight into how different agents are handled. 

\subsection{Agents}

Consider agents whose objective functions are given by the following, where $Q_i$ are symmetric definite positive matrices:
\begin{align*}
g_i(x) &= \frac{1}{2}x^T Q_i x + b_i^T x.
\end{align*}

\begin{description}
\item[Primal Agents:] When $i\in\mathcal{P}$, the agent receives $x_i^k$ and returns the gradient $\nabla g_i(x_i^k)=Q_ix_i^k +b_i$. Using \eqref{eq:primal_update}, we obtain an update of the form: 
\begin{align*}
x_i^{k+1}= \frac{(L_i I_n - Q_i)x_i^k + \rho_i z^k}{L_i+\rho_i} - \frac{b_i-\lambda_i^k}{L_i+\rho_i}.
\end{align*}
\item[Dual Agents:] When $i\in\mathcal{D}$, the agent receives $\lambda_i^k$ and returns $x_i^k$, the solution to the conjugate dual problem of $g_i$ at $\lambda_i^k$: 
\begin{align*}
x_i^{k+1}= Q_i^{-1}(\lambda_i^k-b_i).
\end{align*}
\item[Proximal Agents:] When $i\in\mathcal{X}$, the agent receives both $\lambda_i^k$ and $z^k$  and returns the solution to \eqref{eq:proximal_update}:
\begin{align*}
x_i^{k+1}= (Q_i+\rho_i I_n)^{-1} (\rho_i z^k + \lambda_i^k - b_i).
\end{align*}
\end{description}

\noindent
We rewrite these updates in a way that allows for a more direct comparison of their respective mechanisms.  Let $$\hat{x}_i^{k+1}:=\arg\min_x g_i(x)-{\lambda_i^k}^T x_i^k=Q_i^{-1}(\lambda_i^k-b_i)$$ be the solution to the dual problem, which in the case of a dual agent is simply the definition of $x_i^{k+1}$, but which we extend to the other types of agents using a different notation to avoid confusion. We then have:
\begin{small}
\begin{align*}
\text{primal update}\quad & x_i^{k+1} =& (L_i I_n +\rho_i I_n)^{-1} \rho_i z^k &+& (L_i I_n + \rho_i I_n)^{-1}L_i \hat{x}_i^{k+1}&+ \frac{(L_i I_n-Q_i)}{L_i +\rho_i} (x_i^k-\hat{x}_i^{k+1}) \\
\text{dual update} \quad& x_i^{k+1}=& &&  \hat{x}_i^{k+1}\\
\text{prox. update}\quad & x_i^{k+1}=& (Q_i +\rho_i I_n)^{-1} \rho_i z^k &+& (Q_i + \rho_i I_n)^{-1}Q_i \hat{x}_i^{k+1} .
\end{align*}
\end{small}
Formulated in this manner, the update equations yield a number of comments:

\begin{itemize}
\item All the updates share a common structure and interpretation as resulting from a weighted average of the current consensus plan $z^k$ and the tentative plan that would have resulted from the current price $\lambda_i^k$ in the absence of regularization, i.e., the solution resulting from the direct dualization of the constraint.
\item In the case of a spherical quadratic function of the form $\frac{L_i}{2}x^T x + b_i^T x$,  the updates of the primal and proximal agents would be exactly the same and would have the form:
\begin{align*}
x_i^{k+1}&= \frac{\rho_i z^k + L_i \hat{x}_i^{k+1}}{L_i+\rho_i}.
\end{align*}
\item The expressions  highlight the fact that $L_i I_n$ is a substitute for the Hessian of the agents' functions. This also emphasizes  the potential benefits of having tight quadratic bounds, as suggested in Section~\ref{sec:quad_bounds}. 
\item Compared to the proximal update, the primal update directly replaces the Hessian by $L_i I_n$. As a result, the weighing between the current consensus $z^k$ and $\hat{x}_i^{k+1}$ is the same in all directions, while in the proximal update, the effect of the Hessian $Q_i$ is to weigh different components differently towards one or the other vector.
\item The dual update doesn't just take a weighted average of $z^k$ and $\hat{x}_i^{k+1}$, it also contains an additional term that drags it towards the previous plan $x_i^k$.  The larger the gap between the upper quadratic bound and the function, the larger this effect. 
\item The interpretation of the update as a weighted average also provides some guidance as to how to set $\rho$. It is reasonable that we would want the weights to be of the same order of magnitude; and, as a result, we should set $\rho$ to have the same order of magnitude as the eigenvalues of the $Q_i$, for example, as the geometric mean of the smallest and largest eigenvalues $\sqrt{\mu \beta}$.
\end{itemize}

\subsection{Data}

We generate 30 random quadratic functions by generating symmetric definite matrices $Q_i$ as $Q_i = \alpha I_n + A_i^T A_i$, where $\alpha >0$ guarantees that the matrix is definite, and $A_i=r_{1i} (2 U_i - 1)$, where $r_{1i}>0$ and $U_i$ is a $n\times n$ matrix of random uniform numbers over [0,1].  We use $\alpha=1$ and $r_1=\tilde{U}_i$, which yields matrices with condition numbers ranging from a little over 1 to 60. We then generate 30 random vectors $b_i$ as  $b_i=r_2 u_i$, where $u_i$ is a random uniform vector over [0,1]. We used $r_2=1e4$.

The quadratic functions were then assigned to be:
1) all primal;
2) all dual;
3) all proximal;
4) one third primal, one third dual, and one third proximal;
5) one half primal and one half dual;
6) one half primal and one half proximal;
7) one half dual and one half proximal.

\subsection{Algorithm}

We solved all the configurations above using a version of the algorithm that allows for different regularizing parameters $\rho_p, \rho_d, \rho_x$, and also applied acceleration with adaptive restart in the configurations that allowed it (all primal,  all dual, all proximal, and part dual/proximal).

\subsection{Results}

The results of the algorithm on the different configurations are presented below for different levels of the regularizing parameters. We tried the following settings: 
\begin{itemize}
\item $\rho_p=\rho_d=\rho_x=0.1$ (Figure~\ref{fig:01} top left), 
\item $\rho_p=\rho_d=\rho_x=1$ (Figure~\ref{fig:01} top right), 
\item $\rho_p=\rho_x=10$ and $\rho_d=1$ (Figure~\ref{fig:01} bottom left),  
\item $\rho_p=\rho_x=50$ and $\rho_d=1$ (Figure~\ref{fig:01} bottom right). 
\end{itemize}
Here, $\rho_p$ is the learning rate used for primal agents, $\rho_d$ the learning rate used for dual agents, and $\rho_x$ the learning rate used for proximal agents.

The plots show the relative error of the objective function $f(z)=\sum_{i\in\mathcal{M}} g_i(z)$ for the consensus plan iterates $z_k$:  $\frac{f(z^k)-f(z^*)}{|f(z^*)|}$.

\begin{figure}[htbp]
\centering
\includegraphics[scale=0.35]{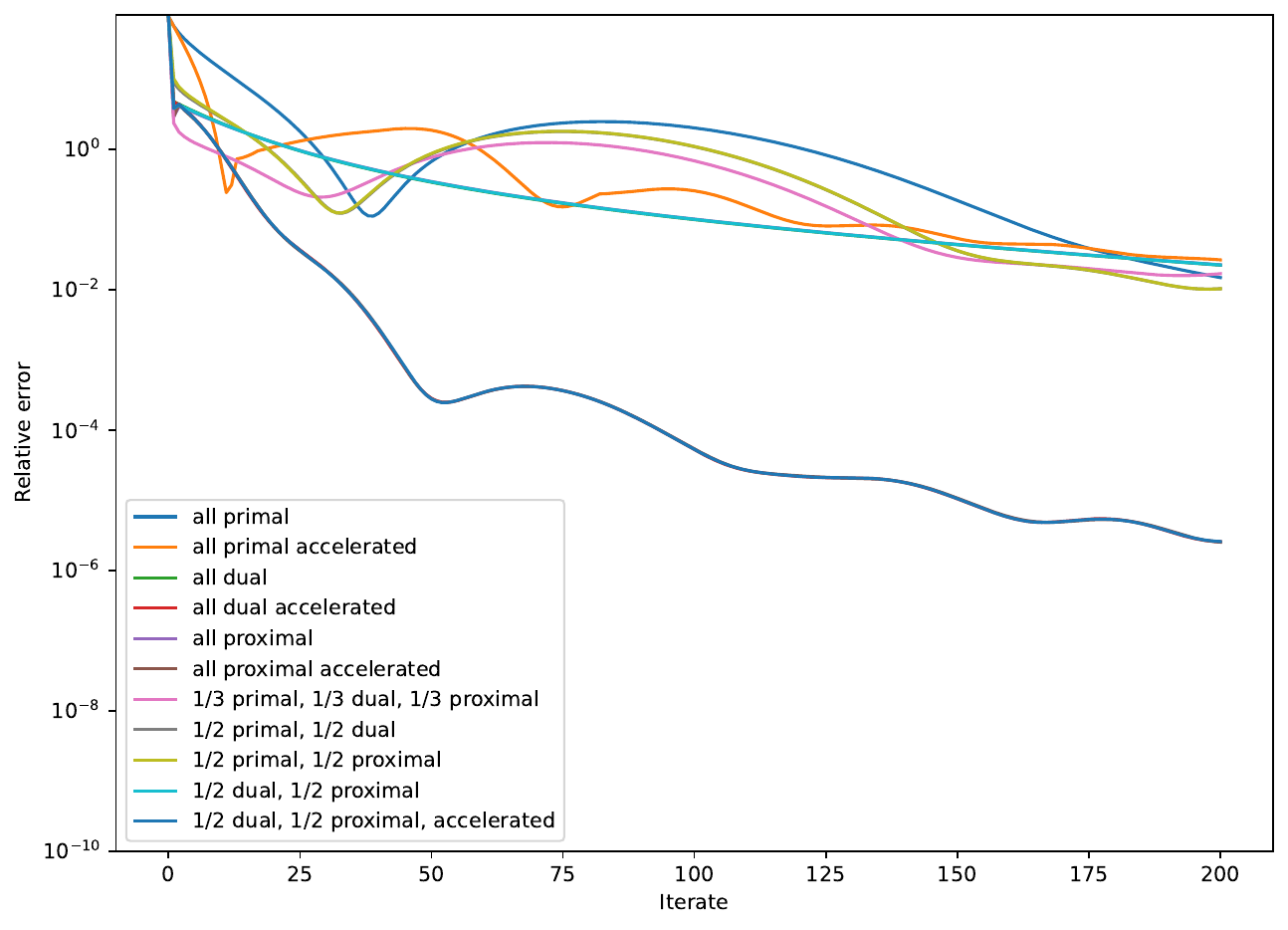}
\includegraphics[scale=0.35]{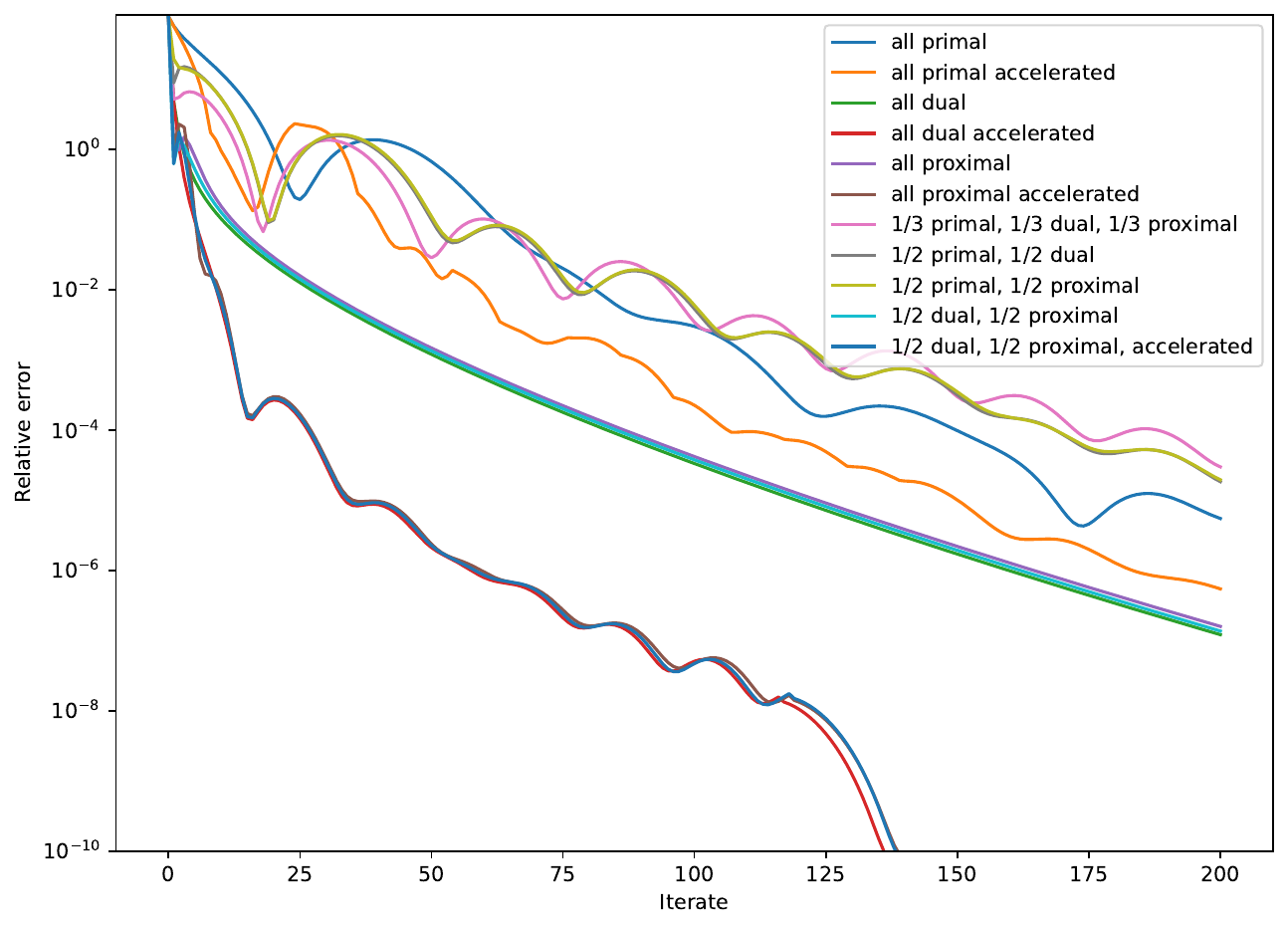}
\includegraphics[scale=0.35]{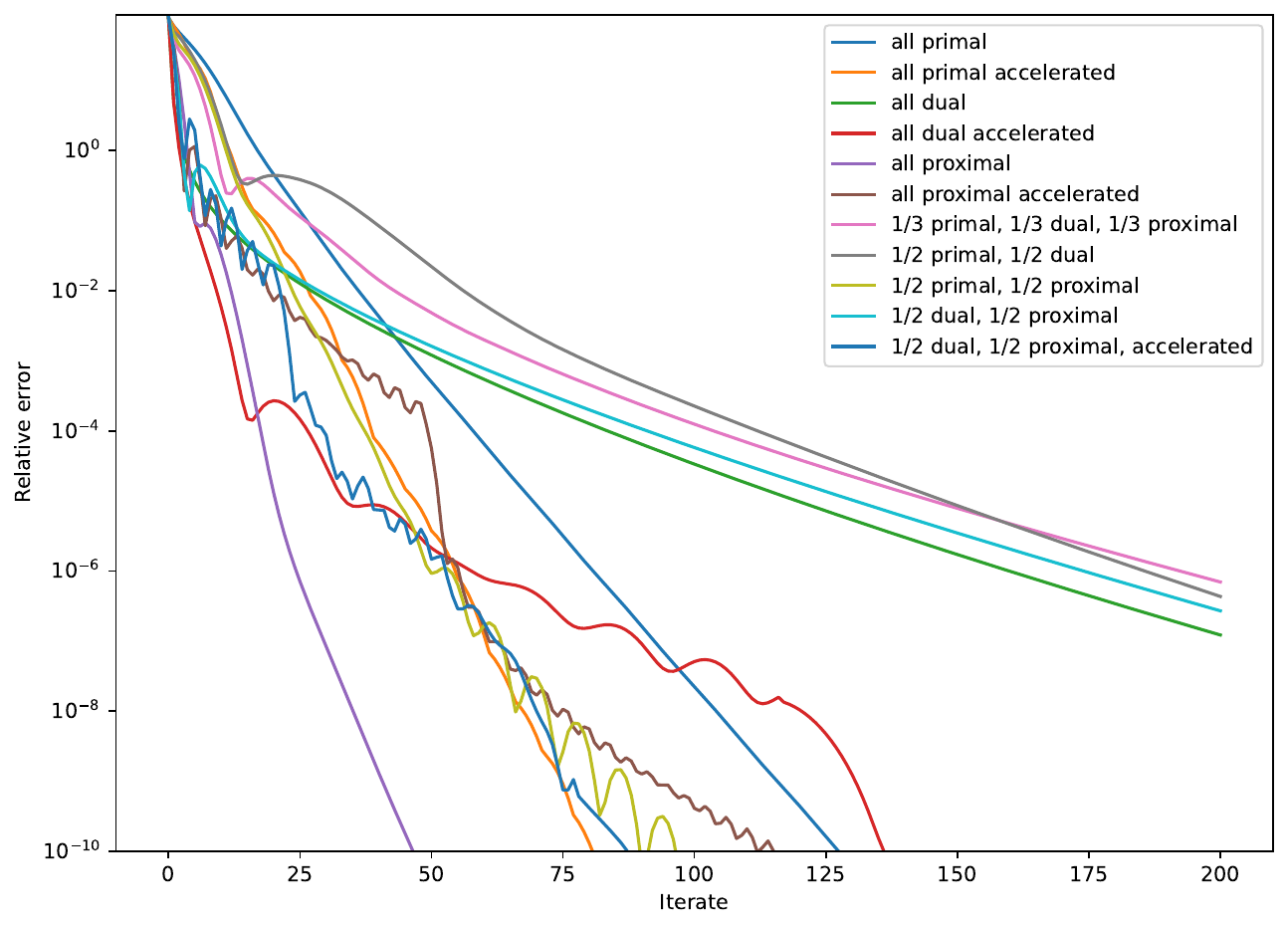}
\includegraphics[scale=0.35]{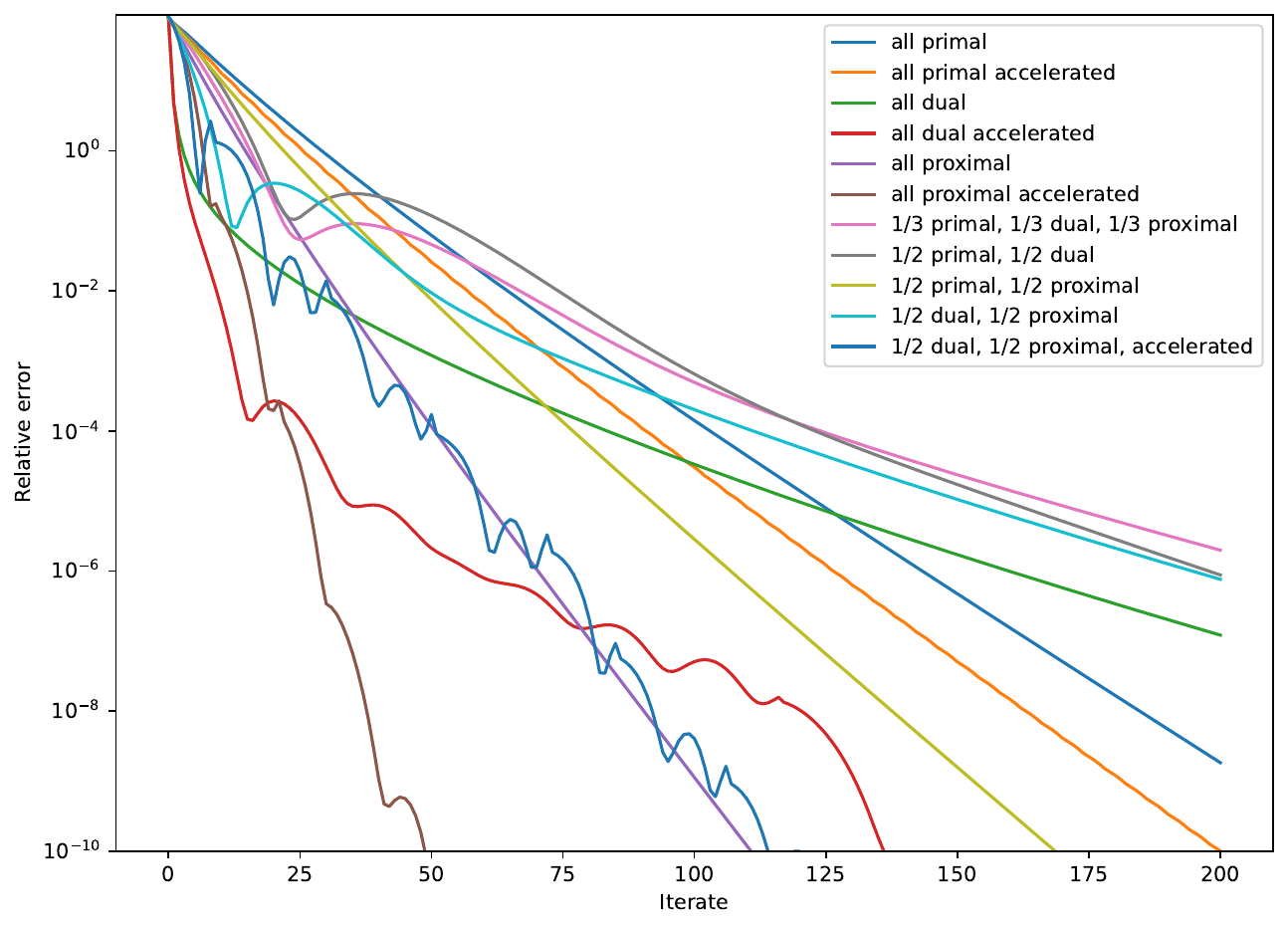}
\caption{Relative error of the consensus plan iterates for different configurations of the mix of agents and learning rates: $\rho_p=\rho_d=\rho_x=0.1$ (top left), $\rho_p=\rho_d=\rho_x=1$ (top right), $\rho_p=\rho_x=10$, $\rho_d=1$ (bottom left),  $\rho_p=\rho_x=50$, $\rho_d=1$ (bottom right).}\label{fig:01}
\end{figure}

The results in Figure~\ref{fig:01} highlight several points:
\begin{itemize}
\item Regardless of the mix of agents (which in many cases is constrained by the application and is not able to be adjusted), the general CPP achieves good convergence.
In each case, the convergence speed of the algorithm is impacted by the choice of the learning rate, which should thus be addressed in any practical implementation.
\item Overall, there is (if possible) a preference for proximal agents because of their superior theoretical properties and guarantees is reflected in the results.
\item The choice between dual and primal agents (when such a choice is possible) is not straightforward. Dual agents require knowledge or a good guess of the strong convexity constant, while proximal agents require the use of a Lipschitz constant. Nonetheless, dual agents require a single value to be set (the learning rate), while primal agents call for two (learning rate and upper bound on the Lipschitz constant), which could make the dual interface an easier one to implement.
\item Acceleration usually yields improvements, but also requires that some parameters be set.
\end{itemize}
\section{Conclusion}

We presented in this paper a general Consensus Planning Protocol that doesn't place any restriction on agent type, and thus allows for any combination of primal, dual, and proximal agents. We proved the convergence of the algorithm, and its sublinear $O(\frac{1}{k})$ convergence rate, as well as two-step linear convergence under strong convexity and Lipschitz continuous gradients for all functions. Numerical examples illustrate the behavior of the algorithm under different combinations of agent types, and the impact of acceleration. While we touched on some practical considerations, several of them will be explored in future work, in particular as relates to acceleration and asynchronous updates.  
\section{Acknowledgments}

This paper is part of  a stream of work to develop a market-based approach to coordination, the Consensus Planning Protocol (CPP) \citep{cpp,cpptalk}. We would especially like to thank Garrett van Ryzin for his support, as well as Manan Chopra, Shekhar Jain, Mitchell Joblin,  and Allam Reddy for their feedback.

\bibliographystyle{abbrvnat}
\bibliography{bib}

\appendix

\section{Proofs}

\subsection{Proof of Proposition~\ref{prop:main_ineq}}\label{app:proof_main_ineq}

The proof of the inequality makes use of the optimality conditions of both the agents' subproblems,  and the optimality condition of the main problem through the saddle point inequality \eqref{eq:saddle_point}. We thus start by stating the subproblems' optimality conditions, which we will subsequently use to transform the saddle point inequality and express it through relations that involve distances of the iterates to their optimal values. We will also use the following definition:
\begin{align*}
\tilde{V}^k &:= \sum_{i\in\mathcal{M}} \frac{1}{2\rho_i}\lVert \lambda_i^k-\lambda_i^*\rVert^2 + \sum_{i\in\mathcal{P}\cup\mathcal{X}} \frac{\rho_i}{2} \lVert z^k - z^*\rVert^2 + \sum_{i\in\mathcal{P}} D_{\phi_i}(x_i^*, x_i^k).
\end{align*}
\begin{description}
\item[Subproblems optimality conditions:]
Algorithm~\ref{alg:firstalgo} carries out two main types of subproblems, the agent updates, and the consensus update.  The agents' subproblems correspond to Equations~\eqref{eq:primal_update}, \eqref{eq:dual_update}, and \eqref{eq:proximal_update}. Their optimality conditions read:
\begin{align}
\nabla g_i(x_i^{k+1})&=  \lambda_i^{k} + \rho_i (z^{k}-x_i^{k+1}) - \left(\nabla \phi_i(x_i^{k+1}) - \nabla \phi_i(x_i^{k})\right),~i\in\mathcal{P}\label{eq:primal_opt_cond}\\
\nabla g_i(x_i^{k+1})&= \lambda_i^k + \rho_i (z^{k}-x_i^{k+1}),~i\in\mathcal{X},\label{eq:prox_opt_cond}\\
\nabla g_i(x_i^{k+1})&= \lambda_i^k,~i\in\mathcal{D}.\label{eq:dual_opt_cond}
\end{align}

\item[Saddle-point inequality:]
We now apply the right-hand side of the saddle-point inequality \eqref{eq:saddle_point} to the iterate $(\mathbf{x}^{k+1},z^{k+1},\bm{\lambda}^{k+1})$,  thus yielding:
\begin{align}
0&\leq \mathcal{L}(\mathbf{x}^{k+1},z^{k+1},\bm{\lambda}^*) - \mathcal{L}(\mathbf{x}^*,z^*,\bm{\lambda}^*)\nonumber \\
&= \sum_{i\in\mathcal{M}} g_i(x_i^{k+1}) - g_i(x_i^*)+ {\lambda_i^*}^T (z^{k+1}-x_i^{k+1}),\nonumber\\
&\leq \sum_{i\in\mathcal{M}} \nabla g_i(x_i^{k+1})^T (x_i^{k+1}-x_i^*)  + {\lambda_i^*}^T (z^{k+1}-x_i^{k+1}),\label{eq:sublinear_proof_dummy1}
\end{align}
using the convexity of the functions $g_i$. We then apply the optimality conditions \eqref{eq:primal_opt_cond}, \eqref{eq:prox_opt_cond}, and \eqref{eq:dual_opt_cond} to replace $\nabla g_i(x_i^{k+1})$ in the inequality above. For ease of exposition, we consider each term in turn.
\begin{itemize}
\item For the primal agents, we have:
\begin{align*}
&\sum_{i\in\mathcal{P}} \nabla g_i(x_i^{k+1})^T (x_i^{k+1}-x_i^*) +  {\lambda_i^*}^T (z^{k+1}-x_i^{k+1})\\
&= \sum_{i\in\mathcal{P}} \left( \lambda_i^{k} + \rho_i (z^{k}-x_i^{k+1}) - \left(\nabla \phi_i(x_i^{k+1}) - \nabla \phi_i(x_i^{k})\right)\right)^T \left(x_i^{k+1}-x_i^*\right)\\
&\qquad +  {\lambda_i^*}^T (z^{k+1}-x_i^{k+1})\\
&=\sum_{i\in\mathcal{P}} \left( \lambda_i^{k} + \rho_i (z^{k} - z^{k+1}) + \rho_i(z^{k+1}-x_i^{k+1}) - \left(\nabla \phi_i(x_i^{k+1}) - \nabla \phi_i(x_i^{k})\right)\right)^T \left(x_i^{k+1}-x_i^*\right) \\
&\qquad +  {\lambda_i^*}^T (z^{k+1}-x_i^{k+1})\\
&=\sum_{i\in\mathcal{P}} {\lambda_i^{k+1}}^T(x_i^{k+1}-z^{k+1}) + {\lambda_i^{k+1}}^T (z^{k+1}-z^*) + \rho_i (z^k-z^{k+1})^T(x_i^{k+1}-x_i^*)\\
&\qquad+  {\lambda_i^*}^T (z^{k+1}-x_i^{k+1})- \left(\nabla \phi_i(x_i^{k+1}) - \nabla \phi_i(x_i^{k})\right)^T \left(x_i^{k+1}-x_i^*\right) \\
 &=\sum_{i\in\mathcal{P}} -({\lambda_i^{k+1}}-\lambda_i^*)^T(x_i^{k+1}-z^{k+1}) + \rho_i (z^k-z^{k+1})^T(x_i^{k+1}-x_i^*) \\
 &\qquad - \left(\nabla \phi_i(x_i^{k+1}) - \nabla \phi_i(x_i^{k})\right)^T \left(x_i^{k+1}-x_i^*\right) + {\lambda_i^{k+1}}^T (z^{k+1}-z^*)\\
 &=\sum_{i\in\mathcal{X}} -\frac{1}{\rho_i}({\lambda_i^{k+1}}-\lambda_i^*)^T(\lambda_i^{k+1}-\lambda_i^{k}) + \rho_i (z^k-z^{k+1})^T(x_i^{k+1}-z^{k+1}) \\
 &\qquad +\rho_i (z^k-z^{k+1})^T (z^{k+1} -z^*)- \left(\nabla \phi_i(x_i^{k+1}) - \nabla \phi_i(x_i^{k})\right)^T \left(x_i^{k+1}-x_i^*\right)\\
 &\qquad + {\lambda_i^{k+1}}^T (z^{k+1}-z^*)\\
 &=\sum_{i\in\mathcal{P}} -\frac{1}{\rho_i}({\lambda_i^{k+1}}-\lambda_i^*)^T(\lambda_i^{k+1}-\lambda_i^{k}) - \rho_i (z^{k+1}-z^{k})^T (z^{k+1} -z^*) \\
 &\qquad + (z^{k+1}-z^{k})^T(\lambda_i^{k+1}-\lambda_i^{k})\\
 &\qquad- \left(\nabla \phi_i(x_i^{k+1}) - \nabla \phi_i(x_i^{k})\right)^T \left(x_i^{k+1}-x_i^*\right)+ {\lambda_i^{k+1}}^T (z^{k+1}-z^*)\\
 &=\sum_{i\in\mathcal{P}} -\frac{1}{2\rho_i}\lVert \lambda_i^{k+1}-\lambda_i^*\rVert^2 + \frac{1}{2\rho_i}\lVert \lambda_i^{k}-\lambda_i^*\rVert^2 - \frac{1}{2\rho_i}\lVert \lambda_i^{k+1}-\lambda_i^{k}\rVert^2\\
 &\qquad - \frac{\rho_i}{2} \lVert z^{k+1}-z^*\rVert^2 + \frac{\rho_i}{2}  \lVert z^{k}-z^*\rVert^2 - \frac{\rho_i}{2} \lVert z^{k+1}-z^{k}\rVert^2  \\
 &\qquad + \frac{1}{2\rho_i}\lVert \lambda_i^{k+1}-\lambda_i^k \rVert^2 + \frac{\rho_i}{2}\lVert z^{k+1}-z^k\rVert^2 - \frac{\rho_i}{2}  \lVert z^k - x_i^{k+1}\rVert^2\\
 &\qquad - \left(\nabla \phi_i(x_i^{k+1}) - \nabla \phi_i(x_i^{k})\right)^T \left(x_i^{k+1}-x_i^*\right)+ {\lambda_i^{k+1}}^T (z^{k+1}-z^*)\\
&=  \sum_{i\in\mathcal{P}} -\frac{1}{2\rho_i}\lVert \lambda_i^{k+1}-\lambda_i^*\rVert^2 + \frac{1}{2\rho_i}\lVert \lambda_i^{k}-\lambda_i^*\rVert^2 - \frac{\rho_i}{2} \lVert z^{k+1}-z^*\rVert^2 + \frac{\rho_i}{2}  \lVert z^{k}-z^*\rVert^2 \\
&\qquad -\frac{\rho_i}{2} \lVert z^k - x_i^{k+1}\rVert^2 \\
 &\qquad- \left(\nabla \phi_i(x_i^{k+1}) - \nabla \phi_i(x_i^{k})\right)^T \left(x_i^{k+1}-x_i^*\right)+ {\lambda_i^{k+1}}^T (z^{k+1}-z^*)
\end{align*}
Using Lemma~\ref{lem:bregman}, we find:
\begin{align}
&\sum_{i\in\mathcal{P}} \nabla g_i(x_i^{k+1})^T (x_i^{k+1}-x_i^*) +  {\lambda_i^*}^T (z^{k+1}-x_i^{k+1})\nonumber\\
&\leq \sum_{i\in\mathcal{P}} -\frac{1}{2\rho_i}\lVert \lambda_i^{k+1}-\lambda_i^*\rVert^2 + \frac{1}{2\rho_i}\lVert \lambda_i^{k}-\lambda_i^*\rVert^2 - \frac{\rho_i}{2} \lVert z^{k+1}-z^*\rVert^2 + \frac{\rho_i}{2}  \lVert z^{k}-z^*\rVert^2 \nonumber\\
&\qquad -\frac{\rho_i}{2} \lVert z^k - x_i^{k+1}\rVert^2 \nonumber\\
 &\qquad- D_{\phi_i}(x_i^*,x_i^{k+1}) +  D_{\phi_i}(x_i^*,x_i^{k}) -  D_{\phi_i}(x_i^{k+1},x_i^{k})\nonumber\\
&\qquad + {\lambda_i^{k+1}}^T (z^{k+1}-z^*). \label{eq:sublinear_proof_primal}
\end{align}
\item The sum over proximal agents is carried out identically to the primal agents, only with $\phi_i=0$, yielding:
\begin{align}
&\sum_{i\in\mathcal{X}} \nabla g_i(x_i^{k+1})^T (x_i^{k+1}-x_i^*) +  {\lambda_i^*}^T (z^{k+1}-x_i^{k+1})\nonumber\\
&\leq \sum_{i\in\mathcal{X}} -\frac{1}{2\rho_i}\lVert \lambda_i^{k+1}-\lambda_i^*\rVert^2 + \frac{1}{2\rho_i}\lVert \lambda_i^{k}-\lambda_i^*\rVert^2 - \frac{\rho_i}{2} \lVert z^{k+1}-z^*\rVert^2 + \frac{\rho_i}{2}  \lVert z^{k}-z^*\rVert^2 \nonumber\\
&\qquad -\frac{\rho_i}{2} \lVert z^k - x_i^{k+1}\rVert^2  + {\lambda_i^{k+1}}^T (z^{k+1}-z^*). \label{eq:sublinear_proof_prox}
\end{align}
\item For the dual agents, we have:
\begin{align}
&\sum_{i\in\mathcal{D}} \nabla g_i(x_i^{k+1})^T (x_i^{k+1}-x_i^*) +  {\lambda_i^*}^T (z^{k+1}-x_i^{k+1})\nonumber\\
&=\sum_{i\in\mathcal{D}}  {\lambda_i^k}^T (x_i^{k+1}-x_i^*) +  {\lambda_i^*}^T (z^{k+1}-x_i^{k+1})\nonumber\\
&=\sum_{i\in\mathcal{D}} {\lambda_i^k}^T (x_i^{k+1}-z^{k+1}) + \lambda_i^k (z^{k+1}-z^*) + {\lambda_i^*}^T (z^{k+1}-x_i^{k+1})\nonumber\\
&=\sum_{i\in\mathcal{D}} -\frac{1}{\rho_i} (\lambda_i^{k}-\lambda_i^*)^T (\lambda_i^{k+1}-\lambda_i^k) +  {\lambda_i^k}^T (z^{k+1}-z^*)\nonumber\\
&=\sum_{i\in\mathcal{D}} -\frac{1}{2\rho_i} \lVert \lambda_i^{k+1}-\lambda_i^*\rVert^2 + \frac{1}{2\rho_i} \lVert \lambda_i^{k}-\lambda_i^*\rVert^2 + \frac{1}{2\rho_i} \lVert\lambda_i^{k+1}-\lambda_i^k \rVert^2\nonumber\\
&\qquad + {\lambda_i^k}^T (z^{k+1}-z^*)\label{eq:sublinear_proof_dual}
\end{align}
\end{itemize}
Combining \eqref{eq:sublinear_proof_primal}, \eqref{eq:sublinear_proof_prox}, \eqref{eq:sublinear_proof_dual} into \eqref{eq:sublinear_proof_dummy1}, we find:
\begin{align}
0&\leq \mathcal{L}(\mathbf{x}^{k+1},z^{k+1},\bm{\lambda}^*) - \mathcal{L}(\mathbf{x}^*,z^*,\bm{\lambda}^*)\nonumber \\
&= \tilde{V}^k-\tilde{V}^{k+1}\nonumber \\
&\qquad -\sum_{i\in\mathcal{P}} D_{\phi_i}(x_i^{k+1},x_i^{k}) - \sum_{i\in\mathcal{P}\cup\mathcal{X}} \frac{\rho_i}{2} \lVert z^k - x_i^{k+1}\rVert^2  + \sum_{i\in\mathcal{D}} \frac{1}{2\rho_i} \lVert \lambda_i^{k+1} - \lambda_i^k\rVert^2\nonumber\\
& \qquad +\sum_{i\in\mathcal{P}\cup\mathcal{X}}   {\lambda_i^{k+1}}^T (z^{k+1}-z^*) + \sum_{i\in\mathcal{D}}   {\lambda_i^{k}}^T (z^{k+1}-z^*).\nonumber
\end{align}
We then recall that $\sum_{i\in\mathcal{M}} \lambda_i^k = 0$, so that $\sum_{i\in\mathcal{P}\cup\mathcal{X}} \lambda_i^k = - \sum_{i\in\mathcal{D}} \lambda_i^k$, leading to:
\begin{align}
0&\leq \mathcal{L}(\mathbf{x}^{k+1},z^{k+1},\bm{\lambda}^*) - \mathcal{L}(\mathbf{x}^*,z^*,\bm{\lambda}^*)\nonumber \\
&\leq \tilde{V}^k-\tilde{V}^{k+1}\nonumber \\
&\qquad -\sum_{i\in\mathcal{P}} D_{\phi_i}(x_i^{k+1},x_i^{k}) - \sum_{i\in\mathcal{P}\cup\mathcal{X}} \frac{\rho_i}{2} \lVert z^k - x_i^{k+1}\rVert^2  + \sum_{i\in\mathcal{D}} \frac{1}{2\rho_i} \lVert \lambda_i^{k+1} - \lambda_i^k\rVert^2\nonumber\\
& \qquad - \sum_{i\in\mathcal{D}}   \left({\lambda_i^{k+1}} - \lambda_i^k\right)^T (z^{k+1}-z^*) \nonumber\\
&=  \tilde{V}^k-\tilde{V}^{k+1}\nonumber \\
&\qquad -\sum_{i\in\mathcal{P}} D_{\phi_i}(x_i^{k+1},x_i^{k}) - \sum_{i\in\mathcal{P}\cup\mathcal{X}} \frac{\rho_i}{2} \lVert z^k - x_i^{k+1}\rVert^2  + \sum_{i\in\mathcal{D}} \frac{1}{2\rho_i} \lVert \lambda_i^{k+1} - \lambda_i^k\rVert^2\nonumber\\
& \qquad - \sum_{i\in\mathcal{D}}   \left({\lambda_i^{k+1}} - \lambda_i^k\right)^T (z^{k+1}-x_i^{k+1}) - \sum_{i\in\mathcal{D}}   \left({\lambda_i^{k+1}} - \lambda_i^k\right)^T (x_i^{k+1}-x_i^*)  \nonumber\\
&=  \tilde{V}^k-\tilde{V}^{k+1}\nonumber \\
&\qquad -\sum_{i\in\mathcal{P}} D_{\phi_i}(x_i^{k+1},x_i^{k}) - \sum_{i\in\mathcal{P}\cup\mathcal{X}} \frac{\rho_i}{2} \lVert z^k - x_i^{k+1}\rVert^2  + \sum_{i\in\mathcal{D}} \frac{1}{2\rho_i} \lVert \lambda_i^{k+1} - \lambda_i^k\rVert^2\nonumber\\
& \qquad - \sum_{i\in\mathcal{D}}   \frac{1}{\rho_i} \lVert \lambda_i^{k+1}-\lambda_i^k\rVert^2 - \sum_{i\in\mathcal{D}}   \left({\lambda_i^{k+1}} - \lambda_i^k\right)^T (x_i^{k+1}-x_i^*)  \nonumber\\
&=  \tilde{V}^k-\tilde{V}^{k+1}\nonumber \\
&\qquad -\sum_{i\in\mathcal{P}} D_{\phi_i}(x_i^{k+1},x_i^{k}) - \sum_{i\in\mathcal{P}\cup\mathcal{X}} \frac{\rho_i}{2} \lVert z^k - x_i^{k+1}\rVert^2  - \sum_{i\in\mathcal{D}} \frac{1}{2\rho_i} \lVert \lambda_i^{k+1} - \lambda_i^k\rVert^2\nonumber\\
& \qquad - \sum_{i\in\mathcal{D}}   \left({\lambda_i^{k+1}} - \lambda_i^k\right)^T (x_i^{k+1}-x_i^*). \label{eq:sublinear_proof_dummy2}
\end{align}
We focus on the last term in the sum,  and recall that for $i\in\mathcal{D}$,  $x_i^{k+1}=-\nabla g_i^*(\lambda_i^k)$ (see Proposition~\ref{prop:conv_conj}), whence:
\begin{align}
 & \sum_{i\in\mathcal{D}}-\left({\lambda_i^{k+1}} - \lambda_i^k\right)^T (x_i^{k+1}-x_i^*)\nonumber\\
 &= \sum_{i\in\mathcal{D}}- \left({\lambda_i^{k+1}} - \lambda_i^k\right)^T(\nabla -g_i^*(\lambda_i^{k})-\nabla -g_i^*(\lambda_i^*))\nonumber\\
 &= \sum_{i\in\mathcal{D}} D_{-g_i^*}(\lambda_i^k,\lambda_i^*) -  D_{-g_i^*}(\lambda_i^{k+1},\lambda_i^*) + D_{-g_i^*}(\lambda_i^{k+1},\lambda_i^k),\nonumber
\end{align}
using Lemma~\ref{lem:bregman}.
Substituting this last equality in \eqref{eq:sublinear_proof_dummy2}, we get:
\begin{align}
0&\leq \mathcal{L}(\mathbf{x}^{k+1},z^{k+1},\bm{\lambda}^*) - \mathcal{L}(\mathbf{x}^*,z^*,\bm{\lambda}^*) \leq  V^k - V^{k+1} -r^{k+1}, \nonumber
\end{align}
which is the first inequality in Proposition~\ref{prop:main_ineq}. 

To obtain the second inequality, we recall that $ \mathcal{L}(\mathbf{x}^{k+1},z^{k+1},\bm{\lambda}^*) - \mathcal{L}(\mathbf{x}^*,z^*,\bm{\lambda}^*) = \sum_{i\in\mathcal{M}} g_i(x_i^{k+1})-g_i(x_i^{*})+\lambda_i^*(z^{k+1}-x_i^{k+1})$. Using the convexity of the functions $g_i$, and in particular their strong convexity for $i\in\mathcal{D}$ (letting $\mu_i=0$ for those functions in $\mathcal{P}\cup\mathcal{X}$ that are not strongly convex), we then have:
\begin{align*}
&\sum_{i\in\mathcal{M}} g_i(x_i^{k+1})-g_i(x_i^{*})+\lambda_i^*(z^{k+1}-x_i^{k+1})\\
&= \sum_{i\in\mathcal{M}} g_i(x_i^{k+1})-g_i(x_i^{*})-\lambda_i^*(x_i^{k+1}-x_i^{*})+\lambda_i^*(z^{k+1}-z^{*})\\
&\geq \sum_{i\in\mathcal{M}} \frac{\mu_i}{2}\lVert x_i^{k+1} - x_i^* \rVert^2 + \sum_{i\in\mathcal{M}} \lambda_i^*(z^{k+1}-z^{*})\\
&=\sum_{i\in\mathcal{M}} \frac{\mu_i}{2}\lVert x_i^{k+1} - x_i^* \rVert^2 ,
\end{align*}
using the fact that $\sum_{i\in\mathcal{M}} \lambda_i^k =0$. This yields the second inequality in Proposition~\ref{prop:main_ineq}. 

The third one is then obtained by recalling that $x_i^{k+1}=-\nabla g_i^*(\lambda_i^k)$ for $i\in\mathcal{D}$,  and thus that $\lVert x_i^{k+1} - x_i^* \rVert^2=\lVert \nabla g_i^*(\lambda_i^k) - \nabla g_i^*(\lambda_i^k) \rVert^2$. Additionally,  since the functions $g_i,i\in\mathcal{D}$ have $L_i$-Lipschitz continuous gradients, their convex conjugates $g_i^*$ are $\frac{1}{L_i}$-strongly convex,  and so $\lVert \nabla g_i^*(\lambda_i^k) - \nabla g_i^*(\lambda_i^k) \rVert^2 \geq \frac{1}{L_i^2} \lVert \lambda_i^k - \lambda_i^*\rVert^2$, which gives the desired inequality.
\end{description}

\subsection{Proof of Proposition~\ref{prop:pos_r}}\label{app:pos_r}

To prove that $r^{k+1}\geq 0$, we need only prove that $\sum_{i\in\mathcal{D}} \frac{1}{2\rho_i} \lVert \lambda_i^k-\lambda_i^{k-1}\rVert^2 - \sum_{i\in\mathcal{D}}D_{-g_i^*}(\lambda_i^k,\lambda_i^{k-1})\geq 0$.  Since the functions  $g_i$  are $\mu_i$-strongly convex, the negative of their conjugates are convex with $\frac{1}{\mu_i}$-Lipschitz continuous gradients. Using Lemma~\ref{lem:bregman_lipschitz}, we have:
\begin{align*}
&\sum_{i\in\mathcal{D}} \frac{1}{2\rho_i} \lVert \lambda_i^k-\lambda_i^{k-1}\rVert^2 - \sum_{i\in\mathcal{D}}D_{-g_i^*}(\lambda_i^k,\lambda_i^{k-1})\\
&\geq \sum_{i\in\mathcal{D}}  \frac{1}{2\rho_i} \left(1 - \frac{\rho_i}{\mu_i}\right) \lVert \lambda_i^k-\lambda_i^{k-1}\rVert^2\\
&\geq 0,
\end{align*}
since by assumption we have $\frac{\rho_i}{\mu_i}\leq 1$.

\subsection{Proof of Theorem~\ref{thm:convergence}}\label{app:proof_convergence}

From Proposition~\ref{prop:main_ineq}, we have:
\begin{align*}
r^{k+1} \leq V^k -  V^{k+1}. 
\end{align*}
Summing this inequality over $k$ from 0 to $\infty$, and using the telescoping sum, we get:
\begin{align*}
\sum_{k=0}^{\infty} r^{k+1} &\leq V^0.
\end{align*}
Since the sequence $\{r^k\}$ is non-negative and its series sum is bounded above, it implies that $r^k\to 0$.  We also noted that the sequence $V^k$ is bounded, whence the sequences $\lVert \lambda_i^k - \lambda_i^*\rVert^2$, $\lVert z^k-z^*\rVert$, and $D_{\phi_i}(x_i^*,x_i^k)$ are also bounded.

We then consider the cases where $\mathcal{D}=\emptyset$ and $\mathcal{D}\neq\emptyset$ separately.
\begin{description}
\item[$\mathcal{D}=\emptyset$:]
When $\mathcal{D}=\emptyset$, and as shown in Section~\ref{sec:preliminaries}, we have:
\begin{align*}
r^k&=  \sum_{i\in\mathcal{M}} \frac{1}{2\rho_i} \lVert \lambda_i^k-\lambda_i^{k-1}\rVert^2 + \sum_{i\in\mathcal{M}} \frac{\rho_i}{2} \lVert z^{k} - z^{k-1}\rVert^2 + \sum_{i\in\mathcal{P}} D_{\phi_i}(x_i^{k}, x_i^{k-1}).
\end{align*}
It follows from the convergence of $\{r^k\}$ to 0 that $\lVert \lambda_i^k-\lambda_i^{k-1}\rVert^2 $,  $ \lVert z^{k} - z^{k-1}\rVert^2$, and $D_{\phi_i}(x_i^{k}, x_i^{k-1})$ also converge to 0 for all $i\in\mathcal{M}$.  Then, write $\lambda_i^{k+1}-\lambda_i^k=\rho_i (z^{k+1}-x_i^{k+1}) = \rho_i (z^{k+1}-z^*) - \rho_i(x_i^{k+1}-x_i^*)$. We established that $\lambda_i^{k+1}-\lambda_i^k\to 0$,  which implies that $z^{k+1}-x_i^{k+1}\to 0$ as well. 
We also wrote the bounded (and convergent) sequence $\lambda_i^{k+1}-\lambda_i^k$ as the sum of two sequences,  one of which is bounded,  implying that the other one, $x_i^k-x_i^*$, is bounded as well.

We now turn to the convergence of the objective function, and show that the difference $\sum_{i\in\mathcal{M}} g_i(x_i^{k+1}) - \sum_{i\in\mathcal{M}}  g_i(x_i^{*})$ can be bounded below and above by sequences that converge to 0.
\begin{itemize}
\item We first use the convexity of the functions to bound the difference above:
\begin{align*}
\sum_{i\in\mathcal{M}} g_i(x_i^{k+1}) - \sum_{i\in\mathcal{M}}  g_i(x_i^{*})&\leq \sum_{i\in\mathcal{M}}  \nabla g_i(x_i^{k+1})^T (x_i^{k+1}-x_i^*).
\end{align*}
The expressions for the gradients $\nabla g_i(x_i^{k+1})$ were given in \eqref{eq:primal_opt_cond}, \eqref{eq:primal_opt_cond}, and \eqref{eq:dual_opt_cond}, yielding:
\begin{align*}
&\sum_{i\in\mathcal{M}} g_i(x_i^{k+1}) - \sum_{i\in\mathcal{M}}  g_i(x_i^{*})\\
&\leq \sum_{i\in\mathcal{M}} \left(\lambda_i^k + \rho_i (z^{k}-x_i^{k+1})\right)^T (x_i^{k+1}-x_i^*) \\
&\qquad + \sum_{i\in\mathcal{P}} \left(\nabla \phi_i(x_i^{k+1})-\nabla \phi_i(x_i^k)\right)^T (x_i^{k+1}-x_i^*).
\end{align*}
The first sum can be transformed to:
\begin{align*}
&\sum_{i\in\mathcal{M}} \left(\lambda_i^k + \rho_i (z^{k}-x_i^{k+1})\right)^T (x_i^{k+1}-x_i^*)  \\
&=\sum_{i\in\mathcal{M}}   \left(\lambda_i^{k+1} + \rho_i (z^{k}-z^{k+1})\right)^T (x_i^{k+1}-x_i^*)\\
&=\sum_{i\in\mathcal{M}}   {\lambda_i^{k+1}}^T (x_i^{k+1}-x_i^*) + \rho_i (z^{k}-z^{k+1})^T(x_i^{k+1}-x_i^*)\\
&=\sum_{i\in\mathcal{M}}  -\frac{1}{\rho_i} {\lambda_i^{k+1}}^T(\lambda_i^{k+1}-\lambda_i^k) + {\lambda_i^{k+1}}^T (z^{k+1}-z^*) + \rho_i (z^{k}-z^{k+1})^T(x_i^{k+1}-x_i^*)\\
&=\sum_{i\in\mathcal{M}}  -\frac{1}{\rho_i} {\lambda_i^{k+1}}^T(\lambda_i^{k+1}-\lambda_i^k) + \rho_i (z^{k}-z^{k+1})^T(x_i^{k+1}-x_i^*).
\end{align*}
We thus have:
\begin{align}
&\sum_{i\in\mathcal{M}} g_i(x_i^{k+1}) - \sum_{i\in\mathcal{M}}  g_i(x_i^{*})\nonumber\\
&\leq \sum_{i\in\mathcal{M}}  -\frac{1}{\rho_i} {\lambda_i^{k+1}}^T(\lambda_i^{k+1}-\lambda_i^k) + \rho_i (z^{k}-z^{k+1})^T(x_i^{k+1}-x_i^*)\nonumber\\
&~\qquad + \sum_{i\in\mathcal{P}} \left(\nabla \phi_i(x_i^{k+1})-\nabla \phi_i(x_i^k)\right)^T (x_i^{k+1}-x_i^*)\nonumber\\
&\leq \sum_{i\in\mathcal{M}} \frac{1}{\rho_i} \lVert \lambda_i^{k+1}\rVert \lVert \lambda_i^{k+1}-\lambda_i^k\rVert + \rho_i \lVert z^{k}-z^{k+1}\rVert \lVert x_i^{k+1}-x_i^*\rVert \nonumber\\
&\qquad + \sum_{i\in\mathcal{P}} \lVert \nabla \phi_i(x_i^{k+1})-\nabla \phi_i(x_i^k)\rVert \lVert x_i^{k+1}-x_i^*\rVert.\label{eq:conv_proof_dummy}
\end{align}
We then recall that the sequence $\lVert \lambda_i^k\rVert$ is bounded while $\lVert \lambda_i^{k+1}-\lambda_i^k\rVert \to 0$; that $\lVert z^{k+1}-z^k\rVert \to 0$ while $\lVert x_i^{k+1}-x_i^*\rVert$ is bounded; and that since $D_{\phi_i}(x_i^{k+1},x_i^k)\to 0$, we have $x_i^k - x_i^{k+1} \to 0$, and as a result $ \lVert \nabla \phi_i(x_i^{k+1})-\nabla \phi_i(x_i^k)\rVert \to 0$ by continuity.  Put all together, this implies that \eqref{eq:conv_proof_dummy} converges to 0.
\item On the other hand, the first inequality from Proposition~\ref{prop:main_ineq} gives:
\begin{align*}
\sum_{i\in\mathcal{M}} g_i(x_i^{k+1}) - \sum_{i\in\mathcal{M}}  g_i(x_i^{*}) &\geq \sum_{i\in\mathcal{M}} - {\lambda_i^*}^T\left(z^{k+1}-x_i^{k+1}\right)\\
& \geq \sum_{i\in\mathcal{M}} - \lVert \lambda_i^* \rVert \lVert z^{k+1}-x_i^{k+1}\rVert\\
&\to 0,
\end{align*}
since we showed that $z^{k+1}-x_i^{k+1}\to 0$.
\end{itemize}

\item[$\mathcal{D}\neq\emptyset$:] When $\mathcal{D}\neq\emptyset$,  we have:
\begin{align*}
r^k&:= \sum_{i\in\mathcal{D}} \frac{1}{2\rho_i} \lVert \lambda_i^k-\lambda_i^{k-1}\rVert^2 - \sum_{i\in\mathcal{D}}D_{-g_i^*}(\lambda_i^k,\lambda_i^{k-1}) + \sum_{i\in\mathcal{P}\cup\mathcal{X}} \frac{\rho_i}{2} \lVert z^{k-1} - x_i^{k}\rVert^2\\
&\qquad + \sum_{i\in\mathcal{P}} D_{\phi_i}(x_i^{k}, x_i^{k-1}).
\end{align*}
It follows from the convergence of $\{r^k\}$ to 0 that $\lVert \lambda_i^k-\lambda_i^{k-1}\rVert^2$ converges to 0 for all $i\in\mathcal{D}$, as well as $D_{\phi_i}(x_i^{k+1},x_i^k)$ for all $i\in\mathcal{P}$, and $z^{k-1}-x_i^k\to 0$ for all $i\in\mathcal{P}\cup\mathcal{X}$.  Using Proposition~\ref{prop:main_ineq},  and summing the second and third inequalities from 0 to $\infty$ we also have:
\begin{align*}
\sum_{k=0}^{\infty}\sum_{i\in\mathcal{D}} \frac{\mu_i}{2} \lVert x_i^{k+1}-x_i^*\rVert^2  &\leq V^0 ,\\
\sum_{k=0}^{\infty}\sum_{i\in\mathcal{D}} \frac{\mu_i}{2L_i^2} \lVert \lambda_i^{k}-\lambda_i^*\rVert^2  &\leq V^0,
\end{align*}
whence $ \lVert x_i^{k+1}-x_i^*\rVert^2\to 0$ and $ \lVert \lambda_i^{k}-\lambda_i^*\rVert^2 \to 0$ for $i\in\mathcal{D}$, i.e. $x_i^k\to x_i^*$, and $\lambda_i^k\to \lambda_i^*$.

Then, since $\rho_i(z^{k+1}-z^*)=\lambda_i^{k+1} - \lambda_i^k + \rho_i (x_i^{k+1}-x_i^*)$ and both $\lambda_i^{k+1} - \lambda_i^k\to 0$ and $x_i^{k+1}-x_i^*\to 0$ for $i\in\mathcal{D}$, we also have $z^{k+1}-z^*\to 0$.  It follows immediately from the convergence of $x_i^k$ and $z^k$ to $x_i^*=z^*$ for $i\in\mathcal{D}$ that we also have $z^{k+1}-x_i^{k+1}\to 0$ for all $i\in\mathcal{D}$.  For $i\in\mathcal{P}\cup\mathcal{X}$, since we have both that $z^k\to z^*$ and $z^{k-1}-x_i^k\to 0$, it follows that $x_i\to z^*=x_i^*$.

For the proof of the convergence of the objective function, we proceed just as in the case where $\mathcal{D}=\emptyset$ by bounding the difference  $\sum_{i\in\mathcal{M}} g_i(x_i^{k+1}) - \sum_{i\in\mathcal{M}}  g_i(x_i^{*})$ by sequences that converge to~0.
\begin{itemize}
\item  Using the convexity of the functions $g_i$, we have:
\begin{align*}
\sum_{i\in\mathcal{M}} g_i(x_i^{k+1}) - \sum_{i\in\mathcal{M}}  g_i(x_i^{*})&\leq \sum_{i\in\mathcal{M}}  \nabla g_i(x_i^{k+1})^T (x_i^{k+1}-x_i^*).
\end{align*}
Using transformations similar to the ones performed above, we find:
\begin{align*}
&\sum_{i\in\mathcal{M}} g_i(x_i^{k+1}) - \sum_{i\in\mathcal{M}}  g_i(x_i^{*})\\
&\leq \sum_{i\in\mathcal{P}\cup\mathcal{X}} \rho_i \left(z^k-z^{k+1}\right)^T \left(x_i^{k+1}-x_i^*\right)  \\
&\qquad + \sum_{i\in\mathcal{P}} \left(\nabla \phi_i(x_i^{k+1})-\nabla \phi_i(x_i^k)\right)^T (x_i^{k+1}-x_i^*) \\
&\qquad + \sum_{i\in\mathcal{D}} \left(\lambda_i^{k} -\lambda_i^{k+1}\right)^T(x_i^{k+1}-x_i^*).
\end{align*}
The same arguments using the boundedness of the sequence $\lVert x_i^{k+1} -x_i^*\rVert$, and the convergences to 0 of $\lVert z^{k+1}-z^k\rVert$, $\lVert x_i^{k+1} - x_i^k\rVert$ for $i\in\mathcal{P}$,  and $\lVert \lambda_i^{k+1}-\lambda_i^k\rVert$ for $i\in\mathcal{D}$.
\item The convergence of the lower bound to 0 is identical to the case where $\mathcal{D}=\emptyset$.
\end{itemize}
\end{description}

\subsection{Proof of Theorem~\ref{thm:sublinear_conv}}\label{app:proof_sublinear_conv}

Consider the first inequality in Proposition~\ref{prop:main_ineq}, and average it over $k=0,\ldots,K$:
\begin{align*}
\frac{1}{K+1}\sum_{k=0}^K \left\{\sum_{i\in\mathcal{M}} g_i(x_i^{k+1}) - \sum_{i\in\mathcal{M}} g_i(x_i^*) \right\}+  {\bm{\lambda}^*}^T (A\hat{z}^{K+1}-\mathbf{\hat{x}}^{K+1}) \leq \frac{C}{2(K+1)}.
\end{align*}
Using the convexity of the functions $g_i$, this yields:
\begin{align*}
\sum_{i\in\mathcal{M}} g_i(\hat{x}_i^{K+1}) - \sum_{i\in\mathcal{M}} g_i(x_i^*) + {\bm{\lambda}^*}^T (A\hat{z}^{K+1}-\mathbf{\hat{x}}^{K+1}) \leq \frac{C}{2(K+1)},
\end{align*}
so that 
\begin{align*}
\left\lvert  \sum_{i\in\mathcal{M}} g_i(\hat{x}_i^{K+1}) - \sum_{i\in\mathcal{M}} g_i(x_i^*)\right\rvert \leq \frac{C}{2(K+1)} + \frac{\lVert {\bm{\lambda}^*}\rVert}{(K+1)}  \left\lVert \sum_{k=0}^K (Az^{k+1}-\mathbf{x}^{k+1}) \right\rVert.
\end{align*}
Next,  letting $P$ be a diagonal matrix of appropriate size with elements equal to the $\rho_i$:
\begin{align*}
\left\lVert \sum_{k=0}^K (Az^{k+1}-\mathbf{x}^{k+1}) \right\rVert &= \left\lVert \sum_{k=0}^K P^{-1} \left(\bm{\lambda}^{k+1} - \bm{\lambda}^{k}\right) \right\rVert,\\
&=\left\lVert P^{-1} \left(\bm{\lambda}^{K+1} - \bm{\lambda}^{0}\right) \right\rVert,\\
&\leq \lVert P^{-1}\rVert \left( \left\lVert \bm{\lambda}^{K+1} - \bm{\lambda}^{*} \right\rVert + \left\lVert \bm{\lambda}^{0} - \bm{\lambda}^{*} \right\rVert \right).
\end{align*}
Additionally,  as noted in Remark~\ref{rem:lyapunov},  Proposition~\ref{prop:conv_conj} also implies that $V^k\leq V^0$ for any $k$, and in particular that:
\begin{align*}
\frac{1}{2} \left\lVert \sqrt{P}^{-1} (\bm{\lambda}^{k+1} - \bm{\lambda}^*)\right\rVert^2 \leq \frac{C}{2},
\end{align*}
whence it follows that:
\begin{align*}
 \left\lVert \bm{\lambda}^{k+1} - \bm{\lambda}^*\right\rVert \leq \sqrt{\bar{\rho}C},
\end{align*}
and thus that:
\begin{align*}
\left\lVert A\hat{z}^{k+1}-\mathbf{\hat{x}}^{k+1} \right\rVert &\leq   \frac{2 \sqrt{\bar{\rho} C}}{\underline{\rho}(K+1)},
\end{align*}
which concludes the proof.

\subsection{Proof of Theorem~\ref{thm:linear_conv}}\label{app:proof_linear_conv}

To prove the result, we will  bound each term in $V^k$ by a term involving $V^k-V^{k+1}$.  In order to achieve that, we first derive the inequalities resulting from making use of the strong-convexity and Lipschitz-continuity of the gradients of all the functions.
\begin{description}
\item[Strong convexity bound:]
The bound resulting from the application of the functions' $\mu_i$-strong convexity was already established in Equation~\eqref{eq:main_ineq_strong_conv} of Proposition~\ref{prop:main_ineq} as:
\begin{align*}
\sum_{i\in\mathcal{M}} \frac{\mu_i}{2} \lVert x_i^{k+1}-x_i^*\rVert^2  \leq V^k - V^{k+1} - r^{k+1},
\end{align*}
and in particular:
\begin{align}
\sum_{i\in\mathcal{M}} \frac{\mu_i}{2} \lVert x_i^{k+1}-x_i^*\rVert^2  \leq V^k - V^{k+1},\label{eq:grad_lipsch_bound_0}
\end{align}
\item[Gradient Lipschitz-continuity bound:]
The application of gradient Lipschitz continuity to Equation~\eqref{eq:main_ineq} would have added a term $-\frac{1}{2 L_i}\lVert \nabla g_i(x_i^{k+1})-\nabla g_i(x_i^{*})\rVert^2$ to the right of  Equation~\eqref{eq:sublinear_proof_dummy1} in the proof of Proposition~\ref{prop:main_ineq}, and yielded:
\begin{align}
\sum_{i\in\mathcal{M}}\frac{1}{2L_i}\lVert \nabla g_i(x_i^{k+1})-\nabla g_i(x_i^{*})\rVert^2 \leq V^k - V^{k+1}- r^{k+1}.\label{eq:grad_lipsch_bound_1}
\end{align}
We can carry out a unified treatment of the terms $\frac{1}{2L_i} \lVert \nabla g_i(x_i^{k+1})-\nabla g_i(x_i^{*})\rVert^2$ by recalling that the optimality conditions \eqref{eq:primal_opt_cond}, \eqref{eq:dual_opt_cond}, and \eqref{eq:prox_opt_cond} can all be written as:
\begin{align*}
\nabla g_i(x_i^{k+1})=\lambda_i^k + \tilde{\rho}_i (z^k-x_i^{k+1}) - \left(\nabla \phi_i(x_i^{k+1}) - \nabla \phi_i(x_i^{k}) \right),
\end{align*}
with:
\begin{align*}
\tilde{\rho}_i &= \begin{cases}
\rho_i & i\in\mathcal{P}\cup\mathcal{X},\\
0& i \in \mathcal{D}
\end{cases},&
\phi_i(x)&= 
0,~i\in\mathcal{D}\cup\mathcal{X}.
\end{align*}
We then have:
\begin{align*}
&\frac{1}{2L_i} \lVert \nabla g_i(x_i^{k+1})-\nabla g_i(x_i^{*})\rVert^2\\
&=\frac{1}{2L_i} \left\lVert \lambda_i^k - \lambda_i^* + \tilde{\rho}_i (z^k-x_i^{k+1}) - \left(\nabla \phi_i(x_i^{k+1}) - \nabla \phi_i(x_i^{k}) \right)\right\rVert^2\\
&\geq \frac{(1-\gamma_i)}{2L_i} \left\lVert \lambda_i^k - \lambda_i^* + \tilde{\rho}_i (z^k-x_i^{k+1})\right\rVert^2 -  \frac{(\frac{1}{\gamma_i}-1)}{2L_i}\left\lVert\nabla \phi_i(x_i^{k+1}) - \nabla \phi_i(x_i^{k}) \right\rVert^2\\
&= \frac{1}{2(L_i+L_{\phi_i})} \left\lVert \lambda_i^k - \lambda_i^* + \tilde{\rho}_i (z^k-x_i^{k+1})\right\rVert^2 -  \frac{1}{2L_{\phi}}\left\lVert\nabla \phi_i(x_i^{k+1}) - \nabla \phi_i(x_i^{k}) \right\rVert^2\\
&\geq \frac{1}{2(L_i+L_{\phi_i})} \left\lVert \lambda_i^k - \lambda_i^* + \tilde{\rho}_i (z^k-x_i^{k+1})\right\rVert^2 -  D_{\phi_i}(x_i^{k+1},x_i^k).
\end{align*}
We here used the fact that $\lVert u + v \rVert^2 \geq (1-\gamma) \lVert u \rVert^2 - \left(\frac{1}{\gamma}-1\right) \lVert v\rVert^2$ (see Lemma~\ref{lem:sum_square}) and set  $\gamma_i =\frac{L_{\phi_i}}{L_i+L_{\phi_i}}$.  This value was set so as to cancel the Bregman divergence term between $x_i^{k+1}$ and $x_i^k$ with the one present in $r^{k+1}$.
We then proceed similarly to isolate the term involving $\lVert \lambda_i^k -\lambda_i^*\rVert^2$:
\begin{align*}
&\frac{1}{2L_i} \lVert \nabla g_i(x_i^{k+1})-\nabla g_i(x_i^{*})\rVert^2\\
&\geq \frac{1}{2(L_i+L_{\phi_i})} \left\lVert \lambda_i^k - \lambda_i^* + \tilde{\rho}_i (z^k-x_i^{k+1})\right\rVert^2 -  D_{\phi_i}(x_i^{k+1},x_i^k)\\
&\geq \frac{(1-\nu_i)}{2(L_i+L_{\phi_i})} \left\lVert \lambda_i^k - \lambda_i^*\right\rVert^2 - \frac{(\frac{1}{\nu_i}-1)}{2(L_i+L_{\phi_i})}  \tilde{\rho}_i^2 \left\lVert(z^k-x_i^{k+1})\right\rVert^2 -  D_{\phi_i}(x_i^{k+1},x_i^k)\\
&= \frac{1}{2(L_i+L_{\phi_i}+\rho_i)} \left\lVert \lambda_i^k - \lambda_i^*\right\rVert^2 - \frac{ \tilde{\rho}_i}{2}  \left\lVert(z^k-x_i^{k+1})\right\rVert^2 -  D_{\phi_i}(x_i^{k+1},x_i^k),
\end{align*}
where we again used Lemma~\ref{lem:sum_square} and set $\nu_i=\frac{\tilde{\rho}_i}{L_i+L_{\phi_i}+\rho_i}$ in order to cancel the $\lVert z^k - x_i^{k+1}\rVert^2$ with the one in $r^{k+1}$.  Plugging this back into Equation~\eqref{eq:grad_lipsch_bound_1} yields:
\begin{align*}
&\sum_{i\in\mathcal{M}} \frac{1}{2(L_i+L_{\phi_i}+\tilde{\rho}_i)} \left\lVert \lambda_i^k - \lambda_i^*\right\rVert^2 - \frac{ \tilde{\rho}_i}{2}  \left\lVert(z^k-x_i^{k+1})\right\rVert^2 -  D_{\phi_i}(x_i^{k+1},x_i^k)\\
&\leq \sum_{i\in\mathcal{M}}\frac{1}{2L_i}\lVert \nabla g_i(x_i^{k+1})-\nabla g_i(x_i^{*})\rVert^2 \\
&\leq V^k - V^{k+1}- r^{k+1}
\end{align*}
which implies
\begin{align}
\sum_{i\in\mathcal{M}} \frac{1}{2(L_i+L_{\phi_i}+\tilde{\rho}_i)} \left\lVert \lambda_i^k - \lambda_i^*\right\rVert^2 \leq V^k-V^{k+1}.\label{eq:grad_lipsch_bound_2}
\end{align}
\end{description}

We now bound each term in $V^{k+1}$. 
\begin{itemize}
\item Using \eqref{eq:grad_lipsch_bound_2},  we have:
\begin{align*}
\frac{\underline{\rho}_\mathcal{M}}{\alpha_\mathcal{M}} \sum_{i\in\mathcal{M}} \frac{1}{2\rho_i}\lVert \lambda_i^{k+1} - \lambda_i^*\rVert^2   \leq \sum_{i\in\mathcal{M}} \frac{1}{2(L_i+L_{\phi_i}+\tilde{\rho}_i)} \left\lVert \lambda_i^{k+1} - \lambda_i^*\right\rVert^2 \leq V^{k+1}-V^{k+2}.
\end{align*}
\item Recall that since $-g_i^*$ has $\frac{1}{\mu_i}$-Lipschitz continuous gradients, we have according to Lemma~\ref{lem:bregman_lipschitz} that $D_{-g_i^*}(\lambda_i^{k+1},\lambda_i^*)\leq \frac{1}{2\mu_i}\lVert \lambda_i^{k+1} - \lambda_i^*\rVert^2$, and so, using \eqref{eq:grad_lipsch_bound_2} once again:
\begin{align*}
\frac{\underline{\mu}_\mathcal{D}}{\alpha_\mathcal{D}} \sum_{i\in\mathcal{D}} D_{-g_i^*}(\lambda_i^{k+1},\lambda_i^*) \leq V^{k+1}-V^{k+2}.
\end{align*}
\item Using Jensen's inequality and \eqref{eq:grad_lipsch_bound_0}, we have:
\begin{align*}
\frac{\underline{\mu}_{\mathcal{P}\cup\mathcal{X}}}{\overline{\rho}_{\mathcal{P}\cup\mathcal{X}}}\sum_{i\in\mathcal{P}\cup\mathcal{X}} \frac{\rho_i}{2} \lVert z^{k+1}-z^*\rVert^2 &\leq \frac{\underline{\mu}_{\mathcal{P}\cup\mathcal{X}}}{\overline{\rho}_{\mathcal{P}\cup\mathcal{X}}}\sum_{i\in\mathcal{M}} \frac{\rho_i}{2} \lVert z^{k+1}-z^*\rVert^2\\
&\leq \frac{\underline{\mu}_{\mathcal{P}\cup\mathcal{X}}}{\overline{\rho}_{\mathcal{P}\cup\mathcal{X}}}\sum_{i\in\mathcal{M}} \frac{\rho_i}{2} \lVert x_i^{k+1}-x_i^*\rVert^2\\
&\leq \sum_{i\in\mathcal{M}} \frac{\mu_i}{2} \lVert x_i^{k+1}-x_i^*\rVert^2\\
&\leq  V^k - V^{k+1}.
 \end{align*}
 \item Finally, using Lemma~\ref{lem:bregman_lipschitz}, we have $D_{\phi_i}(x_i^*,x_i^{k+1})\leq \frac{L_{\phi_i}}{2}\lVert x_i^{k+1}-x_i^*\rVert^2$, which combined with \eqref{eq:grad_lipsch_bound_0} gives:
 \begin{align*}
 \frac{\underline{\mu}_\mathcal{P}}{\overline{L_\phi}}\sum_{i\in\mathcal{P}} D_{\phi_i}(x_i^*,x_i^{k+1}) 
 &\leq  \frac{\underline{\mu}_\mathcal{P}}{\overline{L_\phi}}\sum_{i\in\mathcal{P}} \frac{L_{\phi_i}}{2}\lVert x_i^* -x_i^{k+1}\rVert^2\\
 &\leq  \sum_{i\in\mathcal{P}} \frac{\mu_i}{2}\lVert x_i^*-x_i^{k+1}\rVert^2\\
 &\leq \sum_{i\in\mathcal{M}} \frac{\mu_i}{2}\lVert x_i^*-x_i^{k+1}\rVert^2\\
 &\leq V^k - V^{k+1}.
 \end{align*}
\end{itemize}
Grouping the four derived inequalities, we have:
\begin{align*}
\frac{\underline{\rho}_{\mathcal{M}}}{\alpha_{\mathcal{M}}} \sum_{i\in\mathcal{M}} \frac{1}{2\rho_i}\lVert \lambda_i^{k+1} - \lambda_i^*\rVert^2  &\leq V^{k+1}-V^{k+2},\\
\frac{\underline{\mu}_{\mathcal{D}}}{\alpha_{\mathcal{D}}} \sum_{i\in\mathcal{D}} D_{-g_i^*}(\lambda_i^{k+1},\lambda_i^*) &\leq V^{k+1}-V^{k+2}\\
\frac{\underline{\mu}_{\mathcal{P}\cup\mathcal{X}}}{\overline{\rho}_{\mathcal{P}\cup\mathcal{X}}}\sum_{i\in\mathcal{P}\cup\mathcal{X}} \frac{\rho_i}{2} \lVert z^{k+1}-z^*\rVert^2 &\leq  V^k - V^{k+1}\\
 \frac{\underline{\mu}_{\mathcal{P}}}{\overline{L_\phi}}\sum_{i\in\mathcal{P}} D_{\phi_i}(x_i^*,x_i^{k+1})  &\leq V^k - V^{k+1}.
\end{align*}
Summing these four inequalities yields:
\begin{align*}
\frac{1}{2}\min\left\{\frac{\underline{\rho}_\mathcal{M}}{\alpha_\mathcal{M}}, \frac{\underline{\mu}_\mathcal{D}}{\alpha_\mathcal{D}} , \frac{\underline{\mu}_{\mathcal{P}\cup\mathcal{X}}}{\overline{\rho}_{\mathcal{P}\cup\mathcal{X}}},  \frac{\underline{\mu}_\mathcal{P}}{\overline{L_\phi}}\right\} V^{k+1}\leq V^k - V^{k+2}.
\end{align*}
The monotonicity of the sequence $\{V^k\}$ (see Remark~\ref{rem:lyapunov}) further implies that $V^{k+2}\leq V^{k+1}$, and thus~that:
\begin{align*}
\frac{1}{2}\min\left\{\frac{\underline{\rho}_\mathcal{M}}{\alpha_\mathcal{M}}, \frac{\underline{\mu}_\mathcal{D}}{\alpha_\mathcal{D}} , \frac{\underline{\mu}_{\mathcal{P}\cup\mathcal{X}}}{\overline{\rho}_{\mathcal{P}\cup\mathcal{X}}},  \frac{\underline{\mu}_\mathcal{P}}{\overline{L_\phi}}\right\} V^{k+2}\leq V^k - V^{k+2},
\end{align*}
whence:
\begin{align*}
 V^{k+2} \leq \left(1 + \frac{1}{2}\min\left\{\frac{\underline{\rho}_\mathcal{M}}{\alpha_\mathcal{M}}, \frac{\underline{\mu}_\mathcal{D}}{\alpha_\mathcal{D}} , \frac{\underline{\mu}_{\mathcal{P}\cup\mathcal{X}}}{\overline{\rho}_{\mathcal{P}\cup\mathcal{X}}},  \frac{\underline{\mu}_\mathcal{P}}{\overline{L_\phi}}\right\}\right)^{-1} V^{k}.
\end{align*}

\subsection{Proof of Corollary~\ref{cor:linear_conv}}\label{app:proof_linear_conv_corollary}

\begin{description}
\item[All Dual Agents:]
If all the agents are dual, then $V^k = \sum_{i\in\mathcal{D}} \frac{1}{2\rho_i}\|\lambda_i^{k} - \lambda_i^*\|^2 + D_{-g_i^*}(\lambda_i^{k},\lambda_i^*)$. Summing up only the first two inequalities yields:
\begin{align*}
\frac{1}{2}\min\left\{\frac{\underline{\rho}_\mathcal{D}}{\alpha_\mathcal{D}}, \frac{\underline{\mu}_\mathcal{D}}{\alpha_\mathcal{D}} \right\} V^{k}\leq V^k - V^{k+1}.
\end{align*}
The monotonicity of the sequence $\{V^k\}$ (see Remark~\ref{rem:lyapunov}) further implies that $V^{k}\leq V^{k+1}$, and thus~that:
\begin{align*}
\frac{1}{2}\min\left\{\frac{\underline{\rho}_\mathcal{M}}{\alpha_\mathcal{M}}, \frac{\underline{\mu}_\mathcal{D}}{\alpha_\mathcal{D}} \right\} V^{k+1}\leq V^k - V^{k+1},
\end{align*}
whence:
\begin{align*}
 V^{k+1} \leq \left(1 + \frac{1}{2}\min\left\{\frac{\underline{\rho}_\mathcal{M}}{\alpha_\mathcal{M}}, \frac{\underline{\mu}_\mathcal{D}}{\alpha_\mathcal{D}} \right\}\right)^{-1} V^{k}.
\end{align*}
\item[All Primal Agents:]
When all the agents are primal (or proximal), we recall from Remark~\ref{rem:rk} that $r^{k}$ reads:
\begin{align*}
r^k&=  \sum_{i\in\mathcal{M}} \frac{1}{2\rho_i} \lVert \lambda_i^k-\lambda_i^{k-1}\rVert^2 + \sum_{i\in\mathcal{M}} \frac{\rho_i}{2} \lVert z^{k} - z^{k-1}\rVert^2 + \sum_{i\in\mathcal{P}} D_{\phi_i}(x_i^{k}, x_i^{k-1}).
\end{align*}
As a result,  we may change the bounds used in the Gradient Lipshitz-continuity bound in proof \ref{app:proof_linear_conv} as follows:
\begin{align*}
&\frac{1}{2L_i} \lVert \nabla g_i(x_i^{k+1})-\nabla g_i(x_i^{*})\rVert^2\\
&\geq \frac{1}{2(L_i+L_{\phi_i})} \left\lVert \lambda_i^{k+1} - \lambda_i^* + \tilde{\rho}_i (z^k-z^{k+1})\right\rVert^2 -  D_{\phi_i}(x_i^{k+1},x_i^k)\\
&\geq \frac{(1-\nu_i)}{2(L_i+L_{\phi_i})} \left\lVert \lambda_i^{k+1} - \lambda_i^*\right\rVert^2 - \frac{(\frac{1}{\nu_i}-1)}{2(L_i+L_{\phi_i})}  \tilde{\rho}_i^2 \left\lVert(z^k-z^{k+1})\right\rVert^2 -  D_{\phi_i}(x_i^{k+1},x_i^k)\\
&= \frac{1}{2(L_i+L_{\phi_i}+\rho_i)} \left\lVert \lambda_i^{k+1} - \lambda_i^*\right\rVert^2 - \frac{ \tilde{\rho}_i}{2}  \left\lVert(z^k-z^{k+1})\right\rVert^2 -  D_{\phi_i}(x_i^{k+1},x_i^k),
\end{align*}
where we used Lemma~\ref{lem:sum_square} and set $\nu_i=\frac{\tilde{\rho}_i}{L_i+L_{\phi_i}+\rho_i}$ in order to cancel the $\lVert z^k - z^{k+1}\rVert^2$ with the one in $r^{k+1}$. 
As a result, we may replace \eqref{eq:grad_lipsch_bound_2} with:
\begin{align*}
\sum_{i\in\mathcal{P}} \frac{1}{2(L_i+L_{\phi_i}+\tilde{\rho}_i)} \left\lVert \lambda_i^{k+1} - \lambda_i^*\right\rVert^2 \leq V^k-V^{k+1}.
\end{align*}
Proceeding exactly as in  \ref{app:proof_linear_conv}, we obtain the three following inequalities:
\begin{align*}
\frac{\underline{\rho}_{\mathcal{P}}}{\alpha_{\mathcal{P}}} \sum_{i\in\mathcal{P}} \frac{1}{2\rho_i}\lVert \lambda_i^{k+1} - \lambda_i^*\rVert^2  &\leq V^{k}-V^{k+1},\\
\frac{\underline{\mu}_{\mathcal{P}}}{\overline{\rho}_{\mathcal{P}}}\sum_{i\in\mathcal{P}} \frac{\rho_i}{2} \lVert z^{k+1}-z^*\rVert^2 &\leq  V^k - V^{k+1}\\
 \frac{\underline{\mu}_{\mathcal{P}}}{\overline{L_\phi}}\sum_{i\in\mathcal{P}} D_{\phi_i}(x_i^*,x_i^{k+1})  &\leq V^k - V^{k+1}.
\end{align*}
Summing up these three inequalities yields the desired result.
\item[All Proximal Agents:]
The proof is identical to the one for all proximal agents, except that there is one fewer inequality due to the absence of the Bregman terms.

\end{description}

\section{Bregman Divergence}

\begin{definition}[Bregman Divergence]\label{def:bregman}
Given a differentiable convex function $\phi$, the associated Bregman divergence is defined as:
\begin{align*}
D_\phi(y,x) &= \phi(y) - \phi(x) - \nabla \phi(x)^T (y-x).
\end{align*}
\end{definition}

\begin{lemma}\label{lem:bregman}
Let $\phi$ be a convex function and $D_\phi$ be the associated Bregman divergence. Then:
\begin{align*}
\left(\nabla \phi (u) - \nabla \phi(v)\right)^T (w-u) = D_\phi(w,v) - D_\phi(w,u) - D_\phi(u,v),\quad \forall u,v,w.
\end{align*}
\end{lemma}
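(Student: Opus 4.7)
The identity is a purely algebraic three-point relation that falls out of the definition of $D_\phi$ once everything is expanded, so my plan is to prove it by direct substitution and collection of terms, with no convexity machinery beyond differentiability of $\phi$ needed.

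First I would write out each of the three Bregman divergences on the right-hand side explicitly from Definition~\ref{def:bregman}. That gives $D_\phi(w,v) = \phi(w) - \phi(v) - \nabla\phi(v)^T(w-v)$, $D_\phi(w,u) = \phi(w) - \phi(u) - \nabla\phi(u)^T(w-u)$, and $D_\phi(u,v) = \phi(u) - \phi(v) - \nabla\phi(v)^T(u-v)$. The next step is to form $D_\phi(w,v) - D_\phi(w,u) - D_\phi(u,v)$ and observe that the $\phi$-value terms cancel in pairs: the $\phi(w)$ from the first cancels the $\phi(w)$ from the second, the $\phi(u)$ from the second cancels the $\phi(u)$ from the third, and the $-\phi(v)$ from the first cancels $+\phi(v)$ from the third.

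After the function-value cancellation, only three inner-product terms remain, namely $-\nabla\phi(v)^T(w-v) + \nabla\phi(u)^T(w-u) + \nabla\phi(v)^T(u-v)$. I would then group the two terms containing $\nabla\phi(v)$: $\nabla\phi(v)^T[(u-v)-(w-v)] = \nabla\phi(v)^T(u-w) = -\nabla\phi(v)^T(w-u)$. Combined with the surviving $\nabla\phi(u)^T(w-u)$, this yields exactly $(\nabla\phi(u)-\nabla\phi(v))^T(w-u)$, which is the left-hand side.

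There is no real obstacle here; the only thing to be careful about is bookkeeping the signs and correctly recognizing that $(u-v)-(w-v) = u-w$ so that the common factor $w-u$ can be pulled out. The convexity of $\phi$ plays no role in the identity itself (only differentiability is used to make $D_\phi$ well-defined), so the argument is a one-line computation once the definitions are unfolded.
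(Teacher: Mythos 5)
Your proof is correct: expanding the three divergences from Definition~\ref{def:bregman}, cancelling the $\phi$-value terms, and collecting the $\nabla\phi(v)$ inner products via $(u-v)-(w-v)=u-w$ gives exactly the left-hand side, and your remark that only differentiability (not convexity) is used is also accurate. The paper states this lemma without proof, treating it as the standard three-point identity, and your direct algebraic verification is precisely the canonical argument it implicitly relies on.
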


\begin{lemma}\label{lem:bregman_lipschitz}
Let $\phi$ be a convex function with $L$-Lipschitz continuous gradients, and $D_\phi$ be the associated Bregman divergence. Then:
\begin{align*}
D_{\phi}(x,y)\leq \frac{L}{2}\lVert x - y\rVert^2.
\end{align*}
\end{lemma}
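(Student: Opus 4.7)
The plan is to derive the inequality as an immediate consequence of the standard ``descent lemma'' (quadratic upper bound) for differentiable functions with $L$-Lipschitz continuous gradients, which is the workhorse inequality behind essentially every first-order convergence proof. Concretely, I would first establish that for any $x,y$,
\begin{align*}
\phi(x) \leq \phi(y) + \nabla\phi(y)^T(x-y) + \frac{L}{2}\lVert x-y\rVert^2,
\end{align*}
and then simply rearrange this to read $\phi(x) - \phi(y) - \nabla\phi(y)^T(x-y) \leq \frac{L}{2}\lVert x-y\rVert^2$, where the left-hand side is, by Definition~\ref{def:bregman}, exactly $D_\phi(x,y)$.

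To prove the descent lemma itself, I would use the fundamental theorem of calculus applied to the real-valued function $t \mapsto \phi(y + t(x-y))$ on $[0,1]$, giving
\begin{align*}
\phi(x) - \phi(y) = \int_0^1 \nabla\phi\bigl(y + t(x-y)\bigr)^T (x-y)\, dt.
\end{align*}
Subtracting $\nabla\phi(y)^T(x-y) = \int_0^1 \nabla\phi(y)^T(x-y)\,dt$ from both sides, applying Cauchy-Schwarz inside the integrand, and invoking the $L$-Lipschitz continuity of $\nabla\phi$ (so that $\lVert\nabla\phi(y+t(x-y))-\nabla\phi(y)\rVert \leq L t\lVert x-y\rVert$) yields
\begin{align*}
\phi(x) - \phi(y) - \nabla\phi(y)^T(x-y) \leq \int_0^1 L t \lVert x-y\rVert^2\, dt = \frac{L}{2}\lVert x-y\rVert^2,
\end{align*}
which is the desired bound on $D_\phi(x,y)$.

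There is essentially no serious obstacle here: the result is a classical one-liner once the descent lemma is in hand, and the convexity hypothesis on $\phi$ is in fact not used in the derivation (only differentiability and the Lipschitz gradient condition matter for the upper bound; convexity would be needed for the matching lower bound $D_\phi \geq \tfrac{1}{2L}\lVert\nabla\phi(x)-\nabla\phi(y)\rVert^2$, which is not claimed here). The only minor care is to make sure the integral manipulation is stated cleanly so that the proof is self-contained rather than simply citing a textbook, since this bound is invoked repeatedly in Appendix~\ref{app:proof_linear_conv}.
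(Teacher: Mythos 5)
Your proof is correct: the descent-lemma argument via the fundamental theorem of calculus, Cauchy--Schwarz, and the Lipschitz bound $\lVert\nabla\phi(y+t(x-y))-\nabla\phi(y)\rVert \leq Lt\lVert x-y\rVert$ gives exactly $D_\phi(x,y)\leq \frac{L}{2}\lVert x-y\rVert^2$ under the paper's convention $D_\phi(x,y)=\phi(x)-\phi(y)-\nabla\phi(y)^T(x-y)$. Note that the paper itself provides no proof of this lemma --- it is stated in the appendix as a known fact --- so your argument simply supplies the standard derivation the paper implicitly relies on; your side remark that convexity is not actually needed for this upper bound is also accurate.
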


\section{Convex Conjugate}\label{app:conv_conj}

\begin{definition}[Convex Conjugate]\label{def:conv_conj}
Let $f:\mathbb{R}^n\to \mathbb{R}$ be a convex function. Its convex conjugate $f^*:\mathbb{R}^n\to \mathbb{R}$ is defined as:
\begin{align*}
f^*(y)&=\sup_{x\in\mathbb{R}^n} \left\{x^Ty-f(x)\right\}.
\end{align*}
\end{definition}

\begin{proposition}\label{prop:conv_conj}
Let $f$ be a convex and differentiable function, and $f^*$ be its convex conjugate. Then the following statements are equivalent:
\begin{align*}
y=\nabla f(x) \Leftrightarrow x=\nabla f^*(y)\Leftrightarrow x^T y = f(x) + f^*(y).
\end{align*}
\end{proposition}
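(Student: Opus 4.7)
The plan is to establish the three-way equivalence by closing a short cycle of implications using the definition $f^*(y) = \sup_x\{x^T y - f(x)\}$ together with first-order optimality. I will prove (i) $y = \nabla f(x) \Rightarrow x^T y = f(x) + f^*(y)$, (ii) $x^T y = f(x) + f^*(y) \Rightarrow y = \nabla f(x)$, and then invoke the symmetric role of $f$ and $f^*$ (under the implicit assumption that $f^*$ is also differentiable at $y$, which matches the setting in the body of the paper where $g_i^*$ is used with $\nabla g_i^*$) to obtain the equivalence with $x = \nabla f^*(y)$.

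For (i), observe that the map $u \mapsto u^T y - f(u)$ is concave because $f$ is convex. If $y = \nabla f(x)$, then the gradient of this concave map vanishes at $u = x$, so $x$ is a global maximizer. Hence the supremum defining $f^*(y)$ is attained at $x$, giving $f^*(y) = x^T y - f(x)$, i.e., $x^T y = f(x) + f^*(y)$.

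For (ii), the Fenchel--Young inequality $f^*(y) \geq u^T y - f(u)$ holds for every $u$ by definition of the supremum. The hypothesis $x^T y = f(x) + f^*(y)$ says equality holds at $u = x$, so $x$ is a maximizer of the concave function $u \mapsto u^T y - f(u)$. Since $f$ is differentiable, the first-order optimality condition $y - \nabla f(x) = 0$ yields $y = \nabla f(x)$.

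Finally, the equivalence with $x = \nabla f^*(y)$ follows by the symmetric argument applied to $f^*$ in place of $f$ and $(y,x)$ in place of $(x,y)$, once we note that the equality $x^T y = f(x) + f^*(y)$ is symmetric in $(x, f)$ and $(y, f^*)$ using $f^{**} = f$ for proper, closed convex $f$. The only subtle point, and the main obstacle to a fully rigorous proof, is justifying differentiability of $f^*$ at $y$: it requires either an explicit hypothesis or additional structure (e.g., strict convexity of $f$, so that the maximizer in the definition of $f^*$ is unique, which together with standard envelope/Danskin arguments yields $\nabla f^*(y) = x$). Under the convention used throughout the paper, where $g_i$ is strongly convex whenever $g_i^*$ is differentiated, this obstruction does not arise, and the cycle closes.
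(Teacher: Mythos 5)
The paper does not actually prove this proposition: it is stated in Appendix~B as background, a standard fact of convex analysis (the Fenchel--Young equality case together with the gradient-inversion property of conjugates), and is used in the body only for strongly convex $g_i$, where $g_i^*$ is automatically differentiable with Lipschitz gradient. So there is no paper proof to compare against; what matters is whether your argument stands on its own, and it does. Your steps (i) and (ii) are exactly the right mechanism: concavity of $u \mapsto u^T y - f(u)$ makes stationarity at $x$ equivalent to global maximality, which is equivalent to equality in Fenchel--Young, giving $y = \nabla f(x) \Leftrightarrow x^T y = f(x) + f^*(y)$ cleanly. Your handling of the third equivalence is also correct, and you are right to flag the one genuine subtlety: the statement as written presupposes that $f^*$ is differentiable at $y$ (otherwise $\nabla f^*(y)$ is not even defined), and convexity plus differentiability of $f$ alone do not guarantee this --- e.g., for affine $f$ the conjugate has a single-point domain. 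Since $f$ here is finite on all of $\mathbb{R}^n$, it is closed and proper, so $f^{**} = f$ holds and your symmetry argument goes through once differentiability of $f^*$ is granted. One small simplification you could make: instead of invoking $f^{**}=f$, note that equality $x^Ty = f(x)+f^*(y)$ together with Fenchel--Young at all other points $y'$ gives $f^*(y') - f^*(y) \geq x^T(y'-y)$, i.e., $x \in \partial f^*(y)$, which equals $\{\nabla f^*(y)\}$ whenever $f^*$ is differentiable at $y$; this closes the cycle directly and makes the differentiability hypothesis the only extra ingredient.
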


\section{Inequalities}

\begin{lemma}\label{lem:sum_square}
For any two vectors $x,y\in\mathbb{R}^n$,  and $\nu>0$, we have:
\begin{align*}
\lVert x - y \rVert^2 \geq (1-\nu) \lVert x\rVert^2 - \left(\frac{1}{\nu}-1\right) \lVert y\rVert^2.
\end{align*}
\end{lemma}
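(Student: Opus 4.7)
The plan is to prove this inequality by expanding the squared norm on the left and applying a weighted Young-type inequality (equivalently, completing the square) to the cross term. Specifically, I would start by writing $\|x-y\|^2 = \|x\|^2 - 2 x^T y + \|y\|^2$ and then bound $-2 x^T y$ from below. The target inequality, once $\|x\|^2$ and $\|y\|^2$ are moved to the same side, reduces to $\nu \|x\|^2 - 2 x^T y + \tfrac{1}{\nu}\|y\|^2 \geq 0$, which is exactly $\bigl\|\sqrt{\nu}\, x - \tfrac{1}{\sqrt{\nu}}\, y\bigr\|^2 \geq 0$ after factoring. Since $\nu > 0$, this quantity is well-defined and manifestly non-negative.

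Concretely, the key step is to observe that for any $\nu > 0$,
\begin{align*}
0 \leq \Bigl\|\sqrt{\nu}\, x - \tfrac{1}{\sqrt{\nu}}\, y\Bigr\|^2 = \nu \|x\|^2 - 2 x^T y + \tfrac{1}{\nu}\|y\|^2,
\end{align*}
which rearranges to $-2 x^T y \geq -\nu \|x\|^2 - \tfrac{1}{\nu}\|y\|^2$ (a standard Young-type inequality). Substituting this lower bound for $-2 x^T y$ in the expansion of $\|x-y\|^2$ gives $\|x-y\|^2 \geq \|x\|^2 - \nu \|x\|^2 + \|y\|^2 - \tfrac{1}{\nu}\|y\|^2 = (1-\nu)\|x\|^2 - \bigl(\tfrac{1}{\nu}-1\bigr)\|y\|^2$, which is exactly the claim.

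There is no real obstacle here: the result is an elementary algebraic identity, and the only thing to be careful about is the bookkeeping of signs and the coefficients $(1-\nu)$ and $(\tfrac{1}{\nu}-1)$, which emerge naturally once the square $\bigl\|\sqrt{\nu}\, x - \tfrac{1}{\sqrt{\nu}}\, y\bigr\|^2$ is expanded. The condition $\nu > 0$ is needed only to make $\sqrt{\nu}$ and $1/\sqrt{\nu}$ well-defined; no upper bound on $\nu$ is required, and the inequality becomes vacuous (or trivially true) when $\nu \geq 1$, since then $(1-\nu) \leq 0$ while the right-hand side's second term is non-positive as well.
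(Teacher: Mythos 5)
Your proof is correct and uses essentially the same argument as the paper: both complete the square with weights $\sqrt{\nu}$ and $1/\sqrt{\nu}$ and drop the non-negative term $\bigl\lVert \sqrt{\nu}\,x \pm \tfrac{1}{\sqrt{\nu}}\,y \bigr\rVert^2$. The only cosmetic difference is that the paper expands $\lVert x+y\rVert^2$ rather than $\lVert x-y\rVert^2$, which is equivalent under $y \mapsto -y$.
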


\begin{proof}
We have:
\begin{align*}
\lVert x+ y \rVert^2 &= \lVert x\rVert^2 + \lVert y\rVert^2 + 2 x^Ty\\
&= \lVert x\rVert^2 + \lVert y\rVert^2 + 2 (\sqrt{\nu} x)^T\left(\frac{1}{\sqrt{\nu}}\right) y\\
&= \lVert x\rVert^2 + \lVert y\rVert^2 + \left\lVert \sqrt{\nu} x + \frac{1}{\sqrt{\nu}} y\right\rVert^2 - \nu \lVert x\rVert^2 - \frac{1}{\nu}\lVert y \rVert^2\\
 &\geq (1-\nu) \lVert x\rVert^2 - \left(\frac{1}{\nu}-1\right) \lVert y\rVert^2.
\end{align*}
\end{proof}

\end{document}